\theoremstyle{plain}
\newtheorem{theorem}{Theorem}[section]
\newtheorem{lemma}{Lemma}[section]
\newtheorem{proposition}{Proposition}[section]
\newtheorem{corollary}{Corollary}[section]
\theoremstyle{definition}
\newtheorem{definition}{Definition}[section]
\newtheorem{example}{Example}[section]
\theoremstyle{remark}
\title{Hermitian symmetric polynomials and CR complexity}
\author{John P. D'Angelo}
\address{Dept. of Mathematics, Univ. of Illinois, 1409 W. Green St., Urbana IL 61801}
\email{jpda@math.uiuc.edu}
\author{Ji\v{r}\'\i\ Lebl}
\address{Dept. of Mathematics, Univ. of Illinois, 1409 W. Green St., Urbana IL 61801}
\email{jlebl@math.uiuc.edu}
\begin{document}

\maketitle

\begin{abstract} Properties of Hermitian forms are used to investigate several natural 
questions from CR Geometry. To each Hermitian symmetric polynomial we assign a Hermitian form.
We study how the signature pairs of two Hermitian forms behave under the polynomial product.
We show, except for three trivial cases, that every signature pair can be obtained from
the product of two indefinite forms. We provide several new applications to the complexity theory 
of rational mappings between hyperquadrics, including a stability result about the existence of non-trivial 
rational mappings from a sphere to a hyperquadric with a given signature pair.

\medskip

\noindent
{\bf AMS Classification Numbers}: 15B57, 32V30, 32H35, 14P05.

\medskip

\noindent
{\bf Key Words}: Hermitian forms, embeddings of CR manifolds, hyperquadrics, signature pairs,
CR complexity theory, proper holomorphic mappings.
\end{abstract}

\section{Introduction}

This paper uses properties of Hermitian forms to investigate several natural questions from CR Geometry and to 
establish new results about both real and complex polynomials. We begin by showing why Hermitian forms arise.

Let $\rho$ be an arbitrary real-valued polynomial on ${\bf R}^{2n}$. Given a fixed complex structure
on ${\bf C}^n$, $\rho$ then determines a Hermitian form in the following fashion. Call the real variables $(x,y)$
and set $z=x+iy$ and ${\overline z} = x-iy$. 
Put $r(z, {\overline z}) = \rho({z+{\overline z} \over 2}, {z - {\overline z} \over 2i})$. Then polarize
(treat $z$ and ${\overline z}$ as independent variables) to obtain a polynomial $r(z,{\overline w})$. 
We call $r$ a Hermitian symmetric polynomial. See Section 2 for properties of such polynomials.
We write $r$ in multi-index notation as
$$ r(z, {\overline w}) = \sum c_{\alpha \beta} z^\alpha {\overline w}^\beta;\eqno (1) $$
the matrix $C=(c_{\alpha \beta})$ is Hermitian symmetric. Thus $\rho$ determines $C$ uniquely (given a fixed complex structure on ${\bf R}^{2n}$).
Furthermore, without loss of generality, one can add one variable and bihomogenize $r$. We can then regard the Hermitian form $C$
as being defined on the complex vector space $V(d,n+1)$ of homogeneous polynomials of degree $d$ in $n+1$ variables.

Conversely given such a Hermitian form on $V(d,n+1)$ we can recover $r$ and then $\rho$. We will usually
express our ideas in terms of $r$. We say that ${\bf s}(r) = (A,B)$
if $C$ has $A$ positive and $B$ negative eigenvalues, and we call $(A,B)$ the signature pair of $r$.
We write ${\bf r}(r)$ for the rank $A+B$ of $C$.
We naturally call $r$ {\it positive semi-definite} if ${\bf s}(r) = (A,0)$,
{\it negative semi-definite} if ${\bf s}(r) = (0,B)$, and {\it indefinite} if ${\bf s}(r)=(A,B)$
where both $A$ and $B$ are not zero. We warn the reader that an indefinite $r$ can be nonnegative as a function.
When we need  to know the number of vanishing eigenvalues we 
consider the {\it inertia triple}
${\bf in}(r)$. See Definition 2.2.

Our primary interest lies in the relationship
between the underlying Hermitian matrix and the zero set of $r$. 
To investigate this matter we naturally need to know what happens to signature pairs when we multiply polynomials.
Note that the sum or product
of two Hermitian symmetric polynomials is also Hermitian symmetric. What happens to the underlying
Hermitian matrices under these operations? The situation for sums is completely understood. See [Mq].  The polynomial sum and 
the matrix sum correspond precisely.
The situation for products is more difficult and new phenomena arise. In particular the Hermitian matrix corresponding
to the product of two polynomials is not easily expressed in terms of the corresponding Hermitian matrices.  In this 
paper we will
study carefully what happens to the numbers of positive and negative eigenvalues under this product.

Assume that the signature pair ${\bf s}(p_j)$ is $(A_j,B_j)$.
Let $(A,B)$ denote the signature pair for $p_1 p_2$.
The following inequalities are easy to verify:
$$ A \le A_1 A_2 + B_1 B_2 \eqno (2.1)$$
$$ B \le A_1 B_2 + A_2 B_1 \eqno (2.2) $$
$$ {\bf r}(p_1 p_2) = A+B \le (A_1 + B_1)(A_2 + B_2) = {\bf r}(p_1) {\bf r}(p_2). \eqno (2.3) $$
In each case equality holds for generic choices of
 the coefficients, but strict inequality
 is possible and plays a significant role
in our applications to CR Geometry. 

Let $(A,B)$ be an ordered pair 
of non-negative
integers. Theorem 4.1 states that there are indefinite homogeneous polynomials $r_1$ and $r_2$ with ${\bf s}(r_1 r_2) = (A,B)$ if and only if 
$(A,B)$ is not one of the three trivial cases $(0,0)$, $(1,0)$, $(0,1)$. 

In Propositions 4.1, 4.2, and in the proof of Theorem 4.1 we provide 
several elementary but striking examples. 
For example, there exists pairs of (quite special)
Hermitian symmetric polynomials with arbitrarily large ranks
whose product has signature pair $(2,0)$ and hence rank $2$. We call this phenomenon {\it collapsing of rank}. 
This kind of collapse is sharp; by Proposition 2.2 the product has  rank $1$ only if both factors do. 
A  similar result fails
for real-analytic Hermitian symmetric functions.

Theorem 4.1 completely answers a key question about signature pairs. For applications to CR Geometry, however,
we must also analyze the signature pairs of all possible multiples of a fixed polynomial.
Without loss of generality we may bihomogenize and assume that all Hermitian symmetric polynomials under consideration 
are actually bihomogeneous. We can then use these ideas
to investigate rational CR mappings between hyperquadrics. The main results may be regarded
as part of an evolving theory of {\it complexity in CR geometry}. Theorem 7.1
gives an almost  complete analysis when the domain hyperquadric is the 
sphere $S^3$.  Theorem 8.2 provides a stability result
for $S^{2n-1}$, but it does not analyze the finite set of exceptions 
arising in each dimension.

We introduce some notation.

$$ Q(a,b) = \{ z \in {\bf C}^{a+b} : \sum_{j=1}^a |z_j|^2 - \sum_{a+1}^{a+b} |z_j|^2 = 1 \} \eqno (3.1) $$

$$ HQ(A,B) = \{ Z \in {\bf P}^{A+B-1} : \sum_{j=1}^A |Z_j|^2 - \sum_{j=A+1}^{A+B} |Z_j|^2 = 0 \}. \eqno (3.2) $$
Then $HQ(a,b+1)$ is the closure in projective space of $Q(a,b)$ in affine space.

If $p(z) = 0$ on the set $\sum_{j=1}^n |z_j|^2 - |z_{n+1}|^2 = 0$, then $p$ determines a rational mapping from the unit sphere
to a hyperquadric $Q(A,B-1)$. See the discussion following (4) in Section 2.
Given the pair $(A,B)$ we naturally ask whether there are any such nontrivial
mappings, and if so, whether
there is a bound on the degree of the set of these mappings. In the special case where $B=1$, we are asking what happens when
the target is also a sphere, and our conclusions in this case clarify and unify results from many authors. 

Theorem 8.2  provides a stability result for rational mappings from 
$S^{2n-1}$ to hyperquadrics. We find an integer $N$, depending 
quadratically on  $n$,
 such that for each pair $(A,B)$ with  $A+B \ge N$ we can analyze the 
situation. 
In fact, except when the target is also a sphere, Corollary 8.2 shows that 
so-called {\it degree estimates} cannot hold when $A+B \ge N$. For a finite set
of signature pairs, the techniques 
in this paper do not answer in general whether maps exist. The general answer seems to be complicated,
and hence we are satisfied with the stability result.
Theorem 7.1 determines what happens for each pair $(A,B)$,
 with the only  exceptions being $(3,2)$ and $(2,3)$.
Analyzing the cases of small rank is difficult. We rely on
Faran's classification of planar maps [Fa2] and recent work in [L]. The table after Theorem 7.1 nicely summarizes the
situation when $n=2$. 

We discuss how other work fits into these ideas. The papers [BH] and [BEH] provide rigidity results which we can express
using the notion of signature pair. In our notation, the main result in [BH] states the following:
let $r$ denote the defining equation for $HQ(a,b)$. Assume that $a \ge b \ge 2$ and $c \ge b$. 
Let $rq$ be the Hermitian symmetric polynomial corresponding
to a rational CR mapping, expressed in lowest terms. If ${\bf s}(rq)= (c,b)$, then $q$
is a constant and $c=a$. In other words, the number of positive eigenvalues cannot increase in this setting. An example from
[BH] shows that the conclusion can fail if $a<b$. It is even possible to construct such polynomial examples 
of arbitrarily large degree when $a<b$. See [D4].

A rigidity result in [BEH] generalizes the result in [BH].
In our notation and under the same assumptions the following holds. If ${\bf s}(rq)= (c,d)$ and $b \le d < 2b-1$, then
$q$ is a constant, $c=a$, and $b=d$. Both [BH] and [BEH] also prove stronger results when the mappings are assumed to
preserve sides of the hyperquadrics.

Our work focuses on Hermitian forms, rather than on the underlying mappings. 
The results of [BH,BEH] allow the consideration of CR
mappings with linearly dependent components; these components cause 
cancellation in the corresponding Hermitian forms. See Section 5. A nonlinear mapping
from $Q(a,b)$ to $Q(c,d)$
may thus have the same Hermitian form as does a linear
embedding of $Q(a,b)$ in ${\mathbf C}^{c+d}$.

The authors acknowledge support from NSF grants DMS 07-53978 (JPD) and DMS 09-00885 (JL). They also
wish to thank AIM for the workshop on CR Complexity Theory in 2006 which helped nourish some of these ideas.

\section{Hermitian symmetric polynomials}

We begin by recalling some basic facts
about Hermitian symmetric polynomials. See [D3] for details. 

\begin{definition} Let $p:{\bf C}^n \times {\bf C}^n \to {\bf C}$ be a polynomial. We say that
$p$ is Hermitian symmetric if $p(z,{\overline w}) = {\overline {p(w,{\overline z})}}$ for all $z,w$.
\end{definition}

\begin{proposition} The following statements are equivalent:

\begin{itemize}
\item $p$ is Hermitian symmetric.

\item The function $z \to p(z,{\overline z})$ is real-valued.

\item We can write $p(z,{\overline w}) = \sum_{\alpha, \beta} c_{\alpha \beta} z^\alpha {\overline w}^\beta$ where
the matrix of coefficients is Hermitian symmetric: $c_{\alpha \beta} = {\overline {c_{\beta \alpha}}}$. 
\end{itemize} 
\end{proposition}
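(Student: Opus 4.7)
The plan is to prove the three equivalences via the cycle (1) $\Rightarrow$ (2) $\Rightarrow$ (3) $\Rightarrow$ (1). The implication (1) $\Rightarrow$ (2) is immediate: setting $w = z$ in the identity $p(z, \overline{w}) = \overline{p(w, \overline{z})}$ gives $p(z, \overline{z}) = \overline{p(z, \overline{z})}$, so $z \mapsto p(z, \overline{z})$ takes real values.

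For (3) $\Rightarrow$ (1), I would perform a direct calculation. Expanding $p(z, \overline{w}) = \sum c_{\alpha \beta} z^\alpha \overline{w}^\beta$ and using $c_{\alpha \beta} = \overline{c_{\beta \alpha}}$, one gets
$$ \overline{p(w, \overline{z})} = \sum_{\alpha, \beta} \overline{c_{\alpha \beta}} \, \overline{w}^\alpha z^\beta = \sum_{\alpha, \beta} c_{\beta \alpha} \overline{w}^\alpha z^\beta, $$
and relabeling the summation indices $\alpha \leftrightarrow \beta$ in the final sum recovers $p(z, \overline{w})$, which is the Hermitian symmetry identity.

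The only substantive step is (2) $\Rightarrow$ (3). I would write $p(z, \overline{w}) = \sum c_{\alpha \beta} z^\alpha \overline{w}^\beta$ without assuming any symmetry on the coefficients. The hypothesis that $p(z, \overline{z})$ is real-valued yields the functional identity
$$ \sum_{\alpha, \beta} c_{\alpha \beta} z^\alpha \overline{z}^\beta = \overline{\sum_{\alpha, \beta} c_{\alpha \beta} z^\alpha \overline{z}^\beta} = \sum_{\alpha, \beta} \overline{c_{\beta \alpha}} z^\alpha \overline{z}^\beta $$
on all of ${\bf C}^n$. Comparing coefficients then gives $c_{\alpha \beta} = \overline{c_{\beta \alpha}}$.

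The main (and really the only) obstacle is justifying the coefficient comparison, i.e.\ the linear independence of the monomials $z^\alpha \overline{z}^\beta$ as functions on ${\bf C}^n$. I would establish this by applying the operator $\partial_z^\alpha \partial_{\overline{z}}^\beta$ and evaluating at the origin to isolate the coefficient of each monomial; alternatively, one can substitute $z_j = r_j e^{i\theta_j}$, use the linear independence of the characters $e^{i(\alpha - \beta)\cdot \theta}$ to separate terms with fixed $\alpha - \beta$, and then vary the radii $r_j$ to distinguish the remaining monomials. Either route makes this step a short appeal to standard polarization rather than a genuine difficulty, so the real content of the proposition is simply the dictionary between real-valuedness on the diagonal, symmetry of the coefficient matrix, and the polynomial identity defining Hermitian symmetry.
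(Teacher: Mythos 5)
Your proof is correct. The paper itself gives no proof of this proposition --- it is stated as a recalled fact with a reference to [D3] --- and your cyclic argument $(1)\Rightarrow(2)\Rightarrow(3)\Rightarrow(1)$, with the only nontrivial step being the linear independence of the monomials $z^\alpha\overline{z}^\beta$ (which you justify adequately by differentiation at the origin or by the polar-coordinate character argument), is exactly the standard polarization argument one would supply.
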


\begin{definition} Let $p$ be a Hermitian symmetric polynomial.
\begin{itemize}
\item The {\it rank} of $p$, written ${\bf r}(p)$, is the rank of the matrix $(c_{\alpha \beta})$.
\item The {\it signature pair} of $p$, written ${\bf s}(p)$, is $(A,B)$ if the matrix $(c_{\alpha \beta})$ has $A$ positive and $B$ negative eigenvalues.
\item $p$ is {\it positive semi-definite} if ${\bf s}(r) = (A,0)$,
{\it negative semi-definite} if ${\bf s}(r) = (0,B)$, and {\it indefinite} if ${\bf s}(r)=(A,B)$
where both $A$ and $B$ are not zero. 
\item Let $p$ be bihomogeneous of degree $(d,d)$. The {\it inertia triple} of $p$,
written ${\bf in}(r)$ is $(A,B,k)$, where ${\bf s}(p) = (A,B)$ and $k$ is the number of zero eigenvalues.
Here the matrix $C$ is regarded as a Hermitian form on the vector space of homogeneous polynomials of degree $d$.
\end{itemize}
\end{definition}

Let $p$ be Hermitian symmetric with ${\bf s}(p) = (A,B)$. 
As in [D1] we can find holomorphic polynomial mappings
$f:{\bf C}^n \to {\bf C}^A$ and $g:{\bf C}^n \to {\bf C}^B$ such that

$$ p(z,{\overline z}) = ||f(z)||^2 - ||g(z)||^2  = \sum_{j=1}^A |f_j(z)|^2 - \sum_{j=1}^B |g_j(z)|^2, \eqno (4) $$
and the components of $f$ and $g$ are linearly independent.
We write $p = ||f||^2 - ||g||^2$. We can recover $p(z, {\overline w})$ 
from $p(z, {\overline z})$  by polarization.

We can always bihomogenize $p$ by adding the variable $z_{n+1}$ and its conjugate. Thus we will usually assume that $p$
is bihomogeneous on ${\bf C}^{n+1} \times {\bf C}^{n+1}$. We say that $p$ is bihomogeneous of degree $(m,m)$.
It follows from (4) that $f$ and $g$ are homogeneous of degree $m$.
We will then regard the mapping $f \oplus g$ both as a polynomial map between complex Euclidean spaces
and as a rational holomorphic mapping between complex projective spaces:

$$ f \oplus g : {\bf P}^n \to {\bf P}^{A+B-1}. $$
Consider the hyperquadric $Q(A,B-1)$.
Suppose we are working in the open subset of ${\bf P}^{A+B-1}$ where $Z_l \ne 0$.
We divide by $|Z_l|^2$ in (3.2) for some $l$, and we obtain the usual defining equation of a hyperquadric in affine space.
The image of the subset of ${\bf C}^n$ defined by $p(z,{\overline z})=0$ under the map $f \oplus g$ lies in $Q(A,B-1)$. 

In order to better understand this mapping we naturally study the ideal $I(p)$ generated by $p$. 
The collection ${\mathcal H}(n)$ of Hermitian symmetric polynomials on ${\bf C}^n$ is 
isomorphic to the (real) polynomial ring in $2n$ real variables.
We  introduce
the subset ${\mathcal H}(n;A,B)$ of ${\mathcal H}(n)$ consisting of polynomials with signature pair $(A,B)$.  Given $p$,
one of our main concerns will be the set $I(p) \cap {\mathcal H}(n;A,B)$.
 For any ideal $J$ we may also consider the collection of pairs $(A,B)$ 
for which there is 
an element $q \in J$
for which ${\bf s}(q) = (A,B)$, or equivalently, if $J \cap 
{\mathcal H}(n;A,B)$ is non-empty. For each ideal,  $(0,0)$ is in this 
collection.

Given $p$ with ${\bf s}(p) = (A,B)$,
as above we obtain a holomorphic polynomial mapping to the hyperquadric $Q(A,B-1)$. Let us now make the connection to a basic question
in CR Geometry; what are the CR mappings from the unit sphere to a given hyperquadric?
Put $r(z,{\overline z}) = ||z||^2 -|z_{n+1}|^2$. If $I(r) \cap {\mathcal H}(n;A,B)$ is non-empty,
then we can find a multiple $q$ of $r$ with ${\bf s}(q) = (A,B)$. As discussed above, $q$ determines a rational mapping from 
the unit sphere to the hyperquadric $Q(A,B-1)$. Restrictions on possible target hyperquadrics for rational mappings from spheres
(satisfying additional properties such as degree bounds or group-invariance) lead to the fundamental issues in CR complexity theory.

To study CR complexity, we need to understand how the signature pair behaves under multiplication.
Consider the (ordinary) product of two Hermitian symmetric polynomials $p$ and $q$
with ${\bf s}(p) = (A,B)$ and ${\bf s}(q)= (C,D)$. Then ${\bf s}(pq) = (E,F)$, where $E \le AC+BD$ and $F \le AD+BC$. In
the generic case we have $E=AC+BD$ and $F=AD+BC$, but strict inequalities can arise. In fact,
both elements in the ordered pair can decrease and thus ${\bf r}(pq) < {\rm min}({\bf r}(p), {\bf r}(q))$ is possible.
We call this situation {\it collapsing} of the rank.

The next result shows that we cannot collapse the rank to $1$; in other words, if the rank of a product is $1$,
then each factor must have rank $1$. Later we will see that there exist Hermitian symmetric polynomials
of arbitrarily large rank whose product has rank $2$.
Also the conclusion of Proposition 2.2 fails for real-analytic Hermitian symmetric functions. Consider the identity $1 = e^{||z||^2} e^{-||z||^2}$.
If we expand the exponential as a series, then the signature pairs of the factors would be $(\infty,0)$ and $(\infty, \infty)$. Yet their product
has signature pair $(1,0)$. 

\begin{proposition} Let $p$ and $q$ be Hermitian symmetric polynomials with ${\bf r}(pq)=1$.
Then ${\bf r}(p) = {\bf r}(q) = 1$. \end{proposition}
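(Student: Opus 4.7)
The plan is to treat this as a polynomial factorization problem after polarization. Recall that a Hermitian symmetric polynomial $p$ has rank $1$ if and only if $p(z,\overline z) = \epsilon |h(z)|^2$ for some holomorphic polynomial $h$ and some sign $\epsilon\in\{+1,-1\}$; equivalently, after polarization, $p(z,\overline w) = \epsilon\, h(z)\,\overline{h(w)}$. So the goal becomes: given $p(z,\overline w)\,q(z,\overline w) = \epsilon\, h(z)\overline{h(w)}$ with $p,q$ Hermitian symmetric, show each of $p,q$ has this same product form.

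First I would polarize the identity ${\bf r}(pq)=1$. The key ambient ring is $R = {\bf C}[z_1,\dots,z_n,\zeta_1,\dots,\zeta_n]$, where we formally treat the conjugate variables $\zeta_j = \overline w_j$ as independent. This is a polynomial ring in $2n$ variables, hence a UFD. Any irreducible factor of $h(z)$ in ${\bf C}[z]$ remains irreducible in $R$ because it depends on none of the $\zeta_j$; similarly any irreducible factor of $\overline{h(w)}$ is irreducible in $R$. So a prime factorization of $h(z)\overline{h(w)}$ in $R$ splits cleanly as a product of "pure-$z$" primes and "pure-$\zeta$" primes. Using the UFD property, $p(z,\overline w)$ must be a constant times a subproduct, i.e. $p(z,\overline w) = c\,f_1(z)\,f_2(\overline w)$ with $f_1\in{\bf C}[z]$, $f_2\in{\bf C}[\zeta]$, and similarly for $q$.

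Next I would impose Hermitian symmetry. The condition $p(z,\overline w) = \overline{p(w,\overline z)}$ applied to $c\,f_1(z)\,f_2(\overline w)$ forces $c\,f_1(z)\,f_2(\overline w) = \bar c\,\overline{f_1(w)}\,\overline{f_2(\overline z)}$. Comparing pure-$z$ and pure-$\overline w$ parts (again using the UFD structure of $R$), this forces $f_2(\overline w)$ to be a scalar multiple of $\overline{f_1(w)}$ and $c$ to be real. Thus $p(z,\overline z) = c\,|f(z)|^2$ for some holomorphic polynomial $f$ and some real scalar $c$. Since $p\ne 0$ (otherwise $pq=0$ would have rank $0$, not $1$), we have $c\ne 0$, so ${\bf r}(p)=1$. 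The same argument applied to $q$ gives ${\bf r}(q)=1$.

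The only subtle point, and the main thing to verify carefully, is the Hermitian symmetry bookkeeping: one needs that the factorization $p(z,\overline w)=c\,f_1(z)f_2(\overline w)$ extracted from the UFD is genuinely forced (not just allowed) to have $f_2(\overline w)$ proportional to $\overline{f_1(w)}$, which requires that the separation of variables between $z$ and $\overline w$ in $R$ is preserved by the conjugation $p(z,\overline w)\mapsto \overline{p(w,\overline z)}$. This is immediate from the fact that conjugation swaps $z_j\leftrightarrow\overline w_j$ as formal indeterminates in $R$. Everything else is routine, and the failure of the analogous statement for real-analytic Hermitian functions is explained by the fact that $R$ must be replaced by a non-UFD ring of convergent power series, where the identity $1 = e^{\|z\|^2}e^{-\|z\|^2}$ provides a counterexample.
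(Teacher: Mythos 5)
Your argument is correct and is essentially the paper's own proof in slightly different clothing: both polarize, treat $z$ and $\overline w$ as independent variables so that the ring is a UFD, deduce that $p$ and $q$ are each a constant times a product of pure-$z$ and pure-$\overline w$ irreducible factors of $h(z)\overline{h(w)}$, and then invoke Hermitian symmetry to pair each holomorphic factor with its conjugate. The paper packages the last step as dividing by $h(z)\overline{h(w)}$ and noting that two polynomials whose product is $1$ are constants, while you read off the separated form of $p$ directly; the content is the same.
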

\begin{proof} We may assume that $pq= |h|^2$ for a complex valued holomorphic polynomial $h$.
We need to show that each of $p$ and $q$ is itself (plus or minus) the squared absolute value of a holomorphic polynomial.
Polarize the relation $pq=|h|^2$ to obtain
$$ p(z, {\overline w}) q(z, {\overline w}) = h(z) {\overline {h(w)}}. \eqno (5) $$
Now factor $h$ into irreducibles. Each irreducible factor $\phi(z)$ of $h(z)$
must divide either $p(z,{\overline w})$ or $q(z,{\overline w})$. Write $h(z) = u(z)v(z)$ where $u(z)$ divides $p(z,{\overline w})$
and $v(z)$ divides $q(z,{\overline w})$. By symmetry the same is true
for the functions of $w$. We may therefore divide both sides of (5) by $h(z){\overline {h(w)}}$. Put back $w=z$. We obtain 
$$ 1= {p \over |u|^2} \ {q \over |v|^2}. \eqno (6) $$ 
Each factor in (6) is a polynomial; by (6) each must be constant. It follows that ${\bf r}(p) = {\bf r}(q) = 1$. \end{proof}

\section{Factorizations}

 Many of the phenomena which arise
are consequences of elementary factorizations of polynomials in one real variable. We gather these results
in this section.

\begin{lemma} In (7.3) set $a= \sqrt{ 4 \pm 2 \sqrt{2}}$. In (7.4) set $b = \sqrt{2}$. The following identities hold:
$$ 1+ t^4 = (t^2 + \sqrt{2} t + 1) (t^2 - \sqrt{2}t + 1), \eqno (7.1) $$

$$ 1- t^6 = (1+t-t^3-t^4)(1-t+t^2), \eqno (7.2) $$

$$ 1 + t^8 = (t^4 + a t^3 + {a^2 \over 2} t^2 +  a t + 1) (t^4 - a t^3 + {a^2 \over 2} t^2 - a t + 1), \eqno (7.3) $$

$$ 1 + t^{12} = (1 - b t + t^2) (1 + b t + t^2 - t^4 - b t^5 - t^6 + t^8 + b t^9 + t^{10}), \eqno (7.4) $$

$$ 1-t^2-2t^6+t^7 = (1-t+t^2)(1+t-t^2-2t^3-t^4+t^5). \eqno (7.5) $$
\end{lemma}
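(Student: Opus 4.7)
The lemma is a list of five polynomial identities in the single variable $t$, so in principle every part reduces to direct expansion of the right-hand side and comparison of coefficients. Rather than treat the constants $a$ and $b$ as magic numbers, however, I would organize the verification so that each factorization is motivated, and the special values of $a,b$ arise naturally from the constraint that the cross terms vanish.

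For (7.1) the identity $1+t^4 = (t^2+1)^2 - (\sqrt{2}\,t)^2$ is a difference of squares and factors on sight. For (7.3) I would start from the ansatz $(t^4 + at^3 + bt^2 + at + 1)(t^4 - at^3 + bt^2 - at + 1)$, which by the difference of squares equals $(t^4 + bt^2 + 1)^2 - a^2 t^2 (t^2+1)^2$. Expanding and requiring the coefficients of $t^2,t^4,t^6$ to vanish forces $b = a^2/2$ and $a^4 - 8a^2 + 8 = 0$, whose positive real roots are $a = \sqrt{4 \pm 2\sqrt{2}}$, matching the statement.

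For (7.2) I would use $1 - t^6 = (1-t^3)(1+t^3) = (1-t)(1+t+t^2)(1+t)(1-t+t^2)$, and then regroup the first three factors as $(1-t^2)(1+t+t^2) = 1+t-t^3-t^4$. For (7.4) I would write $1+t^{12} = (1+t^4)(1-t^4+t^8)$, factor $1+t^4$ by (7.1) as $(1 - \sqrt{2}\,t + t^2)(1 + \sqrt{2}\,t + t^2)$, and then multiply the second quadratic by $1-t^4+t^8$ to recognize the long factor on the right. For (7.5), which has small degree and no free parameters, I would simply expand $(1-t+t^2)(1+t-t^2-2t^3-t^4+t^5)$ and collect.

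There is no real obstacle beyond arithmetic bookkeeping; the hardest piece is (7.3), where the specific formula $a = \sqrt{4 \pm 2\sqrt{2}}$ has to be derived rather than merely checked, but even there the calculation reduces to a biquadratic in $a$. All other identities can be done by multiplication, or cross-checked by substituting a few specific values of $t$.
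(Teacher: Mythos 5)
Your verification is correct: all five identities check out, and your derivation of $a=\sqrt{4\pm 2\sqrt{2}}$ from the biquadratic $a^4-8a^2+8=0$ is accurate (with $b=a^2/2$ forced by the vanishing of the $t^6$ and $t^2$ coefficients). The paper itself offers no proof of Lemma 3.1 --- the identities are stated as bare facts to be checked by expansion --- so there is nothing to contradict. The one structural difference worth noting: immediately after the lemma the paper observes that (7.1) and (7.3) are special cases of Lemma 3.2, whose proof factors $t^{2^m}+1$ over ${\bf C}$ into linear factors at the odd powers of $\omega=e^{\pi i/2^m}$ and groups the roots with negative real part into the all-positive factor $Q(t)=\prod\bigl(t^2-2\,{\rm Re}(\omega^{2j+1})t+1\bigr)$. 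That route produces $a$ as a sum of cosines, e.g.\ $a=2\cos(\pi/8)+2\cos(3\pi/8)=\sqrt{4+2\sqrt{2}}$, and explains \emph{why} such factorizations with controlled sign patterns exist for every $m$; your difference-of-squares ansatz is more elementary and self-contained for the specific cases (7.1) and (7.3) but does not generalize as transparently. Your treatments of (7.2), (7.4) via $1+t^{12}=(1+t^4)(1-t^4+t^8)$, and (7.5) by direct expansion are all sound.
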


The importance of these factorizations stems from the number of terms. Let $p$ be a polynomial
in one or several real variables $x$. We write ${\bf S}(p)=(a,b)$ if
$p$ has $a$ positive terms and $b$ negative terms. In each formula from
Lemma 3.1, let $p$ denote the left-hand side and let $p_1$ and $p_2$ denote the factors.
Then for example (7.1) illustrates ${\bf S}(p)= (2,0)$ whereas ${\bf S}(p_1)=(3,0)$ and ${\bf S}(p_2)=(2,1)$.
Also (7.2) illustrates ${\bf S}(p)= (1,1)$ whereas ${\bf S}(p_1)=(2,2)$ and ${\bf S}(p_2)=(2,1)$.

\begin{lemma} Assume $m \ge 2$. For $t \in {\bf R}$ put $P(t) = t^{2^m} + 1$. Thus ${\bf S}(P)=(2,0)$.
Then there is a polynomial $Q(t)$ such that
\begin{itemize}
\item All $2^{m-1}+ 1$ coefficients of $Q$ are positive. Thus ${\bf S}(Q)=(2^{m-1}+1,0)$.

\item $P(t) = Q(t) Q(-t).$
\end{itemize} \end{lemma}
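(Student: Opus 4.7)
The plan is to define $Q(t)$ to be the monic polynomial of degree $2^{m-1}$ whose roots are exactly those roots of $P(t) = t^{2^m}+1$ lying in the open left half-plane $\{\mathrm{Re}\,z < 0\}$, and then verify three things: $Q$ has real coefficients, $Q(t)Q(-t)=P(t)$, and all $2^{m-1}+1$ coefficients of $Q$ are strictly positive.

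First I would describe the root geometry. The roots of $P$ are $\alpha_k = e^{i\pi(2k+1)/2^m}$ for $k = 0,1,\dots,2^m-1$. Since $m\ge 2$, $2^{m-1}$ is even while $2k+1$ is odd, so $(2k+1)/2^m$ is never a half-integer, and hence no $\alpha_k$ lies on the imaginary axis. Exactly $2^{m-1}$ of the roots lie in the open left half-plane; call this set $\mathcal{L}$. The set $\mathcal{L}$ is closed under complex conjugation (conjugation preserves the real part) and is disjoint from $-\mathcal{L}$ (negation flips the sign of the real part). Conjugate closure gives $Q\in\mathbb{R}[t]$.

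Next I would verify the factorization. Since $\deg Q = 2^{m-1}$ is even, $Q(-t)$ is also monic of degree $2^{m-1}$, with root set $-\mathcal{L}$, which is exactly the collection of right-half-plane roots of $P$. Then $Q(t)Q(-t)$ is monic of degree $2^m$ with root multiset $\mathcal{L}\sqcup(-\mathcal{L})$ equal to the full root set of $P$, so $Q(t)Q(-t)=P(t)$.

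The main step is positivity of the coefficients. Group the roots of $Q$ into complex-conjugate pairs; each pair $\{\alpha,\bar\alpha\}\subset\mathcal{L}$ contributes a real quadratic factor
\[
(t-\alpha)(t-\bar\alpha) \;=\; t^2 \;+\; c\,t \;+\; 1,\qquad c = -2\,\mathrm{Re}(\alpha) > 0,
\]
so that $Q(t)=\prod_{j=1}^{2^{m-2}}(t^2+c_j t+1)$ with each $c_j>0$. I would then prove by induction on the number of factors that any such product has strictly positive coefficients: if $R(t)=\sum_{i=0}^{n} a_i t^i$ has every $a_i>0$, then the coefficient of $t^j$ in $(t^2+ct+1)R(t)$ equals $a_{j-2}+c\,a_{j-1}+a_j$ (with $a_i=0$ outside $[0,n]$), and for each $j\in\{0,1,\dots,n+2\}$ at least one of the three indices $j-2,j-1,j$ lies in $\{0,\dots,n\}$, forcing the coefficient to be positive. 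Since $\deg Q = 2^{m-1}$, this yields $2^{m-1}+1$ positive coefficients, giving $\mathbf{S}(Q)=(2^{m-1}+1,0)$.

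The main obstacle is the positivity claim, which depends crucially on $\mathrm{Re}(\alpha)<0$ being strict for every chosen root — that is, that no root of $P$ lies on the imaginary axis. This is precisely where the hypothesis $m\ge 2$ is used; all other steps are essentially bookkeeping with roots of unity. (As a sanity check, the cases $m=2$ and $m=3$ reproduce $Q(t)=t^2+\sqrt{2}\,t+1$ from (7.1) and $Q(t)=t^4+at^3+\tfrac{a^2}{2}t^2+at+1$ from (7.3).)
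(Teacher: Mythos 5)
Your proof is correct and follows essentially the same route as the paper: define $Q$ via the left half-plane roots of $t^{2^m}+1$, pair conjugate roots into quadratics $t^2 - 2\,\mathrm{Re}(\alpha)\,t + 1$ with positive middle coefficient, and conclude. You additionally spell out the positivity of the coefficients of the product (which the paper simply asserts) and the fact that no root lies on the imaginary axis; both are worthwhile details but do not change the argument.
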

\begin{proof} Regard $t$ as a complex variable. Put $\omega = e^{ \pi i \over 2^m}$.
The roots of $P$ occur when $t$ is a $2^m$-th root of $-1$, and hence are odd powers of $\omega$. 
Factor $P$ into linear factors:
$$ P(t) = \prod_j (t-\omega^{2j+1}). $$
The roots are symmetrically located in the four quadrants. 
We define $Q$ by taking the product over the terms where ${\rm Re}(\omega^{2j+1}) < 0$.
Each such factor has a corresponding conjugate factor. Hence
$$ Q(t) = \prod (t^2 - 2 {\rm Re}(\omega^{2j+1})t + 1), $$
and all the coefficients of $Q$ are positive. The remaining terms in the factorization of $P$ define $Q(-t)$, 
and the result follows. \end{proof}

Note that (7.1) and (7.3) are special cases of Lemma 3.2. Our next 
factorization will not be explicit; these polynomials
arise in [D1] and [DLP].

\begin{lemma} Let $f_d(x,y)$ be the polynomial
$$ f_{d} (x,y) = \left( {x + \sqrt{x^2 + 4y} \over 2}\right)^d + \left( {x - \sqrt{x^2 + 4y} \over 2}\right)^d + (-1)^{d+1} y^d. \eqno (8) $$
For each positive integer $d$, $f_d-1$ is divisible by $x+y-1$.
\end{lemma}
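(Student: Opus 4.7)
The plan is to reduce divisibility by $x+y-1$ to the statement that $f_d(x,y) = 1$ whenever $x+y=1$, and then to verify this identity by recognizing $\alpha := \frac{x+\sqrt{x^2+4y}}{2}$ and $\beta := \frac{x-\sqrt{x^2+4y}}{2}$ as the roots of the quadratic $t^2 - xt - y = 0$, which factors very cleanly on the line $x+y=1$.

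First I would observe that $\alpha + \beta = x$ and $\alpha \beta = -y$, so $\alpha^d + \beta^d$ is a symmetric function in $\alpha,\beta$ and hence (by Newton's identities, or by the standard recursion $s_d = x\, s_{d-1} + y\, s_{d-2}$ with $s_0 = 2$, $s_1 = x$) is a polynomial in $x$ and $y$ with integer coefficients. Thus $f_d$ is a genuine element of $\mathbb{Z}[x,y]$, not merely an algebraic function, so the notion of divisibility by $x+y-1$ in $\mathbb{R}[x,y]$ makes sense.

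Next I would substitute $y = 1-x$. Then $\alpha + \beta = x$ and $\alpha\beta = -y = x-1$, so $\alpha$ and $\beta$ are the roots of
\[
t^2 - xt + (x-1) = (t-1)\bigl(t - (x-1)\bigr).
\]
Therefore $\{\alpha,\beta\} = \{1,\, x-1\}$, and hence $\alpha^d + \beta^d = 1 + (x-1)^d$. Combining this with $(-1)^{d+1} y^d = (-1)^{d+1}(1-x)^d = -(x-1)^d$ gives
\[
f_d(x, 1-x) = 1 + (x-1)^d - (x-1)^d = 1.
\]
So $f_d - 1$ vanishes identically on the zero set of the irreducible polynomial $x+y-1$, and divisibility follows (either by the Nullstellensatz in this trivial one-variable-worth form, or simply by performing polynomial division of $f_d - 1$ by $x+y-1$ in $\mathbb{R}[x][y]$ and noting the remainder vanishes at $y = 1-x$).

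I do not foresee a real obstacle here; the only point that deserves care is confirming that $f_d$ really is a polynomial before invoking divisibility, which is handled by the symmetric-function observation in the first step.
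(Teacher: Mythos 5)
Your proof is correct. The paper itself gives no argument for this lemma (it defers to [D1] and [DLP]), but your approach --- recognizing $\alpha$ and $\beta$ as the roots of $t^2 - xt - y = 0$, noting that on $x+y=1$ this quadratic factors as $(t-1)(t-(x-1))$ so that $\alpha^d+\beta^d = 1+(x-1)^d$ cancels against $(-1)^{d+1}y^d = -(x-1)^d$, and then passing from vanishing on the line to divisibility by one-variable polynomial division in $y$ --- is exactly the standard derivation in those references, and your preliminary check via Newton's identities that $f_d$ is genuinely a polynomial is the right point to flag.
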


\section{Signature pairs of products}

We first establish a link between real polynomials and Hermitian symmetric polynomials whose corresponding Hermitian form
is diagonal. The matrix of coefficients of a Hermitian symmetric polynomial $p$ 
is diagonal if and only if we can write $p= ||f||^2 - ||g||^2$ where the components of $f\oplus g$ are monomials.
Consider the {\it moment map}
$$ z \to x = (x_1,...,x_n)= (|z_1|^2,...,|z_n|^2) = {\bf m}(z). \eqno (9) 
$$
Given a Hermitian symmetric polynomial $p$ whose underlying matrix of coefficients is diagonal,
there is a (real) polynomial $P$ in $x$ such that
$$ p(z,{\overline z}) = P(x)= P({\bf m}(z)). $$
We write $p = P \circ {\bf m}$. It is
 evident that ${\bf S}(P) = {\bf s}(p)$, and we hence also  call
${\bf S}(P)$ the signature pair of $P$. 
For some problems in Hermitian linear algebra, the diagonal
case describes the general case; such a statement applies in Theorem 4.1.
See also [DKR] and [DLP] for many additional results in and uses of this special situation.

Given a signature pair $(A,B)$
we wish to find  $p$ with ${\bf s}(p) = (A,B)$ and satisfying some other 
properties. Often we may do so by  first finding a real polynomial
$P$ such that ${\bf S}(P)= (a,b)$.  Then we put $p = P \circ {\bf m}$ or 
$p = H(P \circ {\bf m})$, where
$H$ denotes bihomogenization. In either case ${\bf s}(p)=(a,b)$.

We give a simple example of this sort of reasoning. The signature pair of $r$ is $(N,0)$ if and only if $r$ is the squared norm $||f(z)||^2$ of a
holomorphic polynomial mapping $f$ with $N$ linearly independent components.
It is possible to square a Hermitian symmetric polynomial, which is not itself a squared norm,
and obtain a squared norm. We give an example from [D3] and [DV] where ${\bf s}(r^2) = (9,0)$ whereas ${\bf s}(r)= (4,1)$.

\begin{example} For $0 < \epsilon <1$ put $P_\epsilon(t) = (1+t)^4 - (6 + \epsilon) t^2$. Note that ${\bf S}(P_\epsilon)=(4,1)$
and that ${\bf S}(P_\epsilon^2)= (9,0)$. Put
 $r_\epsilon = H(P_\epsilon \circ {\bf m})$. Then $r_\epsilon$ is defined 
by 
$$ r_\epsilon(z,{\overline z}) = (|z_1|^2 + |z_2|^2)^4 - (6+\epsilon) |z_1 z_2|^4. $$
For $0 < \epsilon < 1$ we have ${\bf s}(r_\epsilon ^2) = (9,0)$ whereas ${\bf s}(r_\epsilon)= (4,1)$.
\end{example}

The authors do not know whether there is an $r$ with ${\bf s}(r) = (3,1)$
and ${\bf s}(r^2) = (N,0)$ for some $N$. There is no $r$ with ${\bf s}(r) = (2,1)$ and
${\bf s}(r^2) = (N,0)$ for some $N$.

We have seen how to collapse terms via multiplication of polynomials in one variable.
This process of composition with the moment map leads to similar phenomena in the Hermitian symmetric case.

\begin{proposition} There are Hermitian symmetric polynomials
$q$ and $r$ such that the following hold:
\begin{itemize} 
\item $q$ and $r$ are each bihomogeneous of degree $(2^{m-1}, 2^{m-1})$. 

\item ${\bf s}(q) = (2^{m-1} + 1, 0)$.

\item ${\bf s}(r) = (2^{m-2} +1, 2^{m-2})$.

\item ${\bf s}(qr) = (2, 0)$.
\end{itemize}
\end{proposition}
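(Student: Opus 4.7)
The plan is to apply Lemma 3.2 with the given $m \ge 2$ to obtain a polynomial $Q(t) = \sum_{j=0}^{2^{m-1}} a_j t^j$ whose $2^{m-1}+1$ coefficients are all positive and which satisfies $Q(t)Q(-t) = 1 + t^{2^m}$. I then homogenize $Q$ in two variables and pull back through the moment map on ${\bf C}^2$. Concretely, let $\tilde Q(x,y) = \sum_{j=0}^{2^{m-1}} a_j\, x^j y^{2^{m-1}-j}$ be the degree $2^{m-1}$ homogenization of $Q$, and set
$$
q(z,{\overline z}) = \tilde Q\bigl(|z_1|^2, |z_2|^2\bigr), \qquad r(z,{\overline z}) = \tilde Q\bigl(-|z_1|^2, |z_2|^2\bigr).
$$
Both are Hermitian symmetric polynomials on ${\bf C}^2$, bihomogeneous of degree $(2^{m-1},2^{m-1})$, so the first bullet is immediate.

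Because these polynomials are linear combinations of distinct monomials in $|z_1|^2$ and $|z_2|^2$, their underlying Hermitian matrices are diagonal, and the signature pairs are read directly off the coefficient signs, as in the discussion preceding Example 4.1. All $2^{m-1}+1$ coefficients of $\tilde Q$ are strictly positive, so ${\bf s}(q) = (2^{m-1}+1,0)$. For $r$ the coefficients are $(-1)^j a_j$; since $m \ge 2$ makes $2^{m-1}$ even, the even indices $j \in \{0,2,\dots,2^{m-1}\}$ contribute $2^{m-2}+1$ positive terms and the odd indices $j \in \{1,3,\dots,2^{m-1}-1\}$ contribute $2^{m-2}$ negative terms, yielding ${\bf s}(r) = (2^{m-2}+1, 2^{m-2})$.

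For the product I clear denominators in the identity of Lemma 3.2: since $Q$ has degree exactly $2^{m-1}$,
$$
\tilde Q(x,y)\, \tilde Q(-x,y) = y^{2^m}\, Q(x/y)\, Q(-x/y) = y^{2^m} + x^{2^m}.
$$
Substituting $x = |z_1|^2$, $y = |z_2|^2$ gives $qr = |z_1|^{2^{m+1}} + |z_2|^{2^{m+1}}$, whose diagonal Hermitian matrix has exactly two positive entries, so ${\bf s}(qr) = (2,0)$. The main obstacle is really just bookkeeping: one must verify that the one-variable factorization of Lemma 3.2 survives bihomogenization, which works precisely because $Q$ has degree exactly $2^{m-1}$, so that the two factors of $Q(t)Q(-t) = 1+t^{2^m}$ homogenize to polynomials of matching degrees in $(x,y)$.
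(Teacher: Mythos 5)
Your proposal is correct and takes essentially the same route as the paper: apply Lemma 3.2 to factor $1+t^{2^m}$ as $Q(t)Q(-t)$, compose with the moment map on ${\bf C}^2$, homogenize, and read the signature pairs off the signs of the diagonal coefficients. You merely make explicit what the paper leaves implicit, namely the definition of $r$ as the homogenization of $Q(-t)\circ{\bf m}$ and the count of $2^{m-2}+1$ positive versus $2^{m-2}$ negative coefficients.
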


\begin{proof} For each integer $m$ with $m\ge 2$, consider the polynomial

$$ p(z,{\overline z}) = |z_1|^{2^m} + |z_2|^{2^m}. $$
Then ${\bf s}(p) = (2,0)$. 
Set $x=(x_1,x_2) = (|z_1|^2, |z_2|^2)$. Then $p(z) = x_1^{2^m} + x_2^{2^m}$.
We factor $p$ by first setting $x_2=1$ and then using
Lemma 3.2. The polynomial $Q$ in that lemma has all positive coefficients
and thus ${\bf S}(Q) = (2^{m-1}+1,0)$. Set $q = Q \circ {\bf m}$.
After homogenizing we obtain the desired conclusion.
\end{proof}
It follows that the rank ${\bf r}(qr)$ of a product can be $2$ even when
the ranks of the factors are  arbitrarily large. It is also possible to 
create examples 
where the rank of the product is $2$ and neither factor has signature pair $(k,0)$.
The examples in the next proposition will be used to complete the story in Theorem 4.1.

\begin{proposition} There are Hermitian symmetric polynomials
$q$ and $r$ such that the following hold:
\begin{itemize} 

\item ${\bf s}(q)= (2,2)$, ${\bf s}(r) = (2,1)$, and ${\bf s}(qr) = (1,1)$.
\item ${\bf s}(q)= (5,0)$, ${\bf s}(r) = (3,2)$, and ${\bf s}(qr) = 
(2,0)$.
\item ${\bf s}(q)= (6,3)$, ${\bf s}(r) = (2,1)$, and ${\bf s}(qr) = 
(2,0)$.
\item ${\bf s}(q)= (3,3)$, ${\bf s}(r) = (2,1)$, and ${\bf s}(qr) = (2,2)$.
\end{itemize}
\end{proposition}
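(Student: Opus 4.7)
The plan is to reduce each of the four assertions to a one–variable real identity from Lemma 3.1 and then apply the moment-map machinery of Section 4: given a one-variable real factorization $P(t)=Q(t)R(t)$ with $\mathbf{S}(P)$, $\mathbf{S}(Q)$, $\mathbf{S}(R)$ matching the desired term counts, set $q = H(Q\circ \mathbf{m})$ and $r = H(R\circ \mathbf{m})$ with $x_1 = |z_1|^2$ (after possibly introducing a second variable for bihomogenization). Since the diagonal moment-map construction preserves signature pairs, namely $\mathbf{s}(Q\circ\mathbf{m}) = \mathbf{S}(Q)$ and similarly for $R$ and for the product, the claimed signature pairs for $q$, $r$, and $qr$ follow immediately.

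The matching I have in mind is as follows. For the first claim ($\mathbf{s}(q)=(2,2)$, $\mathbf{s}(r)=(2,1)$, $\mathbf{s}(qr)=(1,1)$), I would use identity (7.2), taking $Q(t)=1+t-t^3-t^4$ and $R(t)=1-t+t^2$, whose product is $1-t^6$. For the second claim ($\mathbf{s}(q)=(5,0)$, $\mathbf{s}(r)=(3,2)$, $\mathbf{s}(qr)=(2,0)$), I would use identity (7.3), taking $Q(t) = t^4+at^3+\tfrac{a^2}{2}t^2+at+1$ (all coefficients positive since $a>0$) and $R(t)=t^4-at^3+\tfrac{a^2}{2}t^2-at+1$, whose product is $1+t^8$. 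For the third claim ($\mathbf{s}(q)=(6,3)$, $\mathbf{s}(r)=(2,1)$, $\mathbf{s}(qr)=(2,0)$), I would use identity (7.4), with $R(t)=1-bt+t^2$ and $Q(t)=1+bt+t^2-t^4-bt^5-t^6+t^8+bt^9+t^{10}$; a direct count (recalling $b=\sqrt{2}>0$) gives six positive and three negative coefficients for $Q$, and the product $1+t^{12}$ has signature pair $(2,0)$. For the fourth claim ($\mathbf{s}(q)=(3,3)$, $\mathbf{s}(r)=(2,1)$, $\mathbf{s}(qr)=(2,2)$), I would use identity (7.5), with $R(t)=1-t+t^2$ and $Q(t)=1+t-t^2-2t^3-t^4+t^5$, whose product $1-t^2-2t^6+t^7$ has two positive and two negative terms.

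After selecting these factorizations, the remaining work is purely bookkeeping: verify the positive/negative term counts of each factor and of each product, and invoke the observation from Section 4 that $\mathbf{S}(P) = \mathbf{s}(P\circ\mathbf{m})$ and that bihomogenization does not affect the signature pair. There is no real obstacle; the only point requiring any care is the term count in (7.4), where one must check that none of the indicated coefficients of $Q$ can cancel (they cannot, since $b>0$ and the explicit coefficients of $Q$ and $R$ listed in Lemma 3.1 are all nonzero and have prescribed signs). Thus each of the four claimed signature triples is realized, and no deeper argument is required beyond exhibiting the right identity.
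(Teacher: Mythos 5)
Your proposal is correct and is essentially identical to the paper's own proof, which likewise takes the factors $Q$ and $R$ from identities (7.2)--(7.5), forms the bihomogenizations of $Q\circ\mathbf{m}$ and $R\circ\mathbf{m}$, and reads off the signature pairs from the term counts. Your matching of each bullet to its identity and your sign counts are all accurate; the paper merely states this more tersely.
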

\begin{proof} Let $Q$ and $R$ denote the factors of any of the real 
polynomials appearing in
(7.2), (7.3), (7.4), and (7.5). In each case let $q$ and $r$
be the 
bihomogenizations of $Q \circ {\bf m}$ and $R \circ {\bf m}$.
We obtain the desired examples. \end{proof}

Consider the formula for $f_d$ from Lemma 3.3. In it we replace $x$ by $|z_1|^2$, replace $y$ by $|z_2|^2$, subtract $1$, 
and homogenize using the $z_3$ variable. 
We obtain a bihomogeneous polynomial
$p_d$ of degree $(d,d)$ which vanishes on the set $r=0$, where
$r=|z_1|^2+|z_2|^2 - |z_3|^2$. Then $p_d=q_dr$. For $d=2m+1$, we have
${\bf s}(p_d) = (m+2,1)$. When $d=2m$ we have ${\bf s}(p_d) = (m+1, 2)$. Thus ${\bf r}(p_d) = m+3$. One can show
that ${\bf r}(q_d) = {d(d+1) \over 2}$, that is, $q_d$ has {\it maximum} rank for its degree. Yet (by [DKR]),
$p_d=q_d r$ has {\it minimum} rank for its degree, given that $p_d \in {\mathcal H}(2;N,1) \cap I(r)$
and that it depends only on the squared moduli of the variables.

These polynomials have other important properties. For example, they 
are invariant under an interesting representation of a finite cyclic subgroup of the unitary group $U(2)$. See [D1] and [D4].

We have seen that it is subtle to decide what happens to the ranks of Hermitian symmetric 
polynomials under multiplication. The next Theorem completely answers what are the possible signature
pairs of a non-trivial product.

The crucial information in the statement of Theorem 4.1 below
is that $r_1$ and $r_2$ are indefinite.  By Proposition 4.1  we can obtain 
$(2,0)$ for the signature pair
of a product when one of the factors has signature pair $(A,0)$.
What happens if we insist that neither factor is positive semi-definite,
in other words, that neither factor is a squared norm? Remarkably, we can still get $(2,0)$.
In fact we can get any pair except $(0,0)$ (obviously), $(1,0)$, or $(0,1)$ (by Proposition 2.2).

\begin{theorem} There exist indefinite polynomials $r_1$ and $r_2$ such that ${\bf s}(r_1 r_2) = (A,B)$
if and only if $(A,B)$ is not one of $(0,0)$, $(1,0)$, $(0,1)$. \end{theorem}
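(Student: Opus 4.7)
The plan is to handle necessity and sufficiency separately.

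For necessity I verify that the three pairs $(0,0)$, $(1,0)$, $(0,1)$ really are unattainable. The pair $(0,0)$ would force $r_1 r_2 \equiv 0$, and since Hermitian symmetric polynomials sit inside an integral domain one of the factors must vanish; but the zero polynomial is not indefinite. The pairs $(1,0)$ and $(0,1)$ give ${\bf r}(r_1 r_2) = 1$, so Proposition 2.2 forces each factor to have rank one and hence to be of the form $\pm |h|^2$, which is semi-definite rather than indefinite.

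For sufficiency I work via the moment map reduction described at the start of Section 4: it suffices to produce real polynomials $P_1$ and $P_2$ in variables $x_1,\dots,x_N$, each with coefficients of both signs, so that the expansion of $P_1 P_2$ has exactly $A$ positive and $B$ negative monomials with no cancellation. The bihomogenizations of $P_i \circ {\bf m}$ are then the required $r_i$. The base pairs $(1,1)$, $(2,0)$, $(2,2)$ are supplied by Proposition 4.1, and $(0,2)$ follows from $(2,0)$ by negating one factor.

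To reach every other pair I combine two devices. First, a \emph{tensor product construction}: if $(P_1,P_2)$ realizes $(A_1,B_1)$ in one set of variables and $(Q_1,Q_2)$ realizes $(A_2,B_2)$ in a disjoint set, then $(P_1 Q_1,\, P_2 Q_2)$ realizes $(A_1 A_2 + B_1 B_2,\, A_1 B_2 + A_2 B_1)$; both factors stay indefinite, and the disjoint variables prevent cancellation in the expansion. Second, the polynomials $f_d$ of Lemma 3.3 give the signatures $(m+2,1)$ and $(m+1,2)$ directly through the factorization $p_d = q_d r$, because $r = x+y-1$ is indefinite and a quick sign check shows that $q_d$ is indefinite too. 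Together with the symmetry $p \leftrightarrow -p$ these devices cover every pair $(A,B)$ with $A, B \ge 1$, and also many $(A,0)$ obtained by tensoring with $(2,0)$.

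The main obstacle is realizing the pure signatures $(A,0)$ for $A \ge 3$, and symmetrically $(0,B)$: here the product has only positive coefficients yet both factors must be strictly indefinite. I plan to resolve this by regrouping real factorizations of cyclotomic-type polynomials. For instance, $1+t^3+t^6$ splits over the reals into three real quadratics $t^2 - 2\cos(2\pi k/9)\, t + 1$ with $k=1,2,4$, of which two are indefinite and one (where $\cos(8\pi/9) < 0$) is positive definite; absorbing the positive definite quadratic into one of the indefinite ones produces two indefinite factors whose product has signature $(3,0)$. Applying the same absorption trick to analogous higher real factorizations, together with the tensor construction, then yields $(A,0)$ for every $A \ge 2$ not already covered. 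This step of forcing a product with all positive signs to factor as a product of two strictly indefinite polynomials is the most delicate part of the argument.
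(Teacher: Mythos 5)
Your necessity argument is the same as the paper's and is fine. The sufficiency half, however, has genuine gaps: the toolkit you assemble (base pairs from the Lemma 3.1 factorizations, the disjoint-variable tensor construction, and the $f_d$ family) does not reach most signature pairs. The tensor construction can only produce a pair whose rank $A+B=(A_1+B_1)(A_2+B_2)$ factors nontrivially, so it is useless whenever $A+B$ is prime, and even when the rank factors the output is confined to the special form $(A_1A_2+B_1B_2,\,A_1B_2+A_2B_1)$. The $f_d$ family only yields second coordinate $1$ or $2$, and it misses $(2,1)$ outright since $q_1$ is the constant $1$, hence not indefinite. Consequently a pair such as $(4,3)$ (rank $7$, prime) is unreachable by everything you list, as is most of the region $A,B\ge 3$ off the diagonal. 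The paper closes this region with an elementary device you are missing: take $r_1(t)=1\pm t\pm t^2+\cdots+t^d$ with $a$ positive coefficients, $2\le a\le d$, and $r_2(t)=1-\epsilon t$; for small $\epsilon$ the product has signature $(a,\,d+2-a)$, which realizes every $(A,B)$ with $A,B\ge 2$, and $(k,1)$ for $k\ge2$ comes similarly from $(1-t)^2(1+\epsilon t)^n=(1-t)\cdot\bigl[(1-t)(1+\epsilon t)^n\bigr]$.

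The case $(A,0)$, which you rightly flag as the delicate one, is only actually resolved for $A=3$ by your computation with $1+t^3+t^6$; the extension to all $A$ via ``analogous higher real factorizations'' is asserted rather than proved, and tensoring cannot bridge the gaps (it at best multiplies ranks, so every prime $A$ needs its own direct construction). The paper's trick here is worth absorbing: with $1+t^{12}=r_1r_2$ from (7.4), where $r_1=1-\sqrt2\,t+t^2$ is indefinite, write $(1+t^{12})^{n+1}=\bigl[(1+t^{12})^n r_1\bigr]\cdot r_2$; the bracketed factor begins $1-\sqrt2\,t+\cdots$ and is therefore automatically indefinite, while the product has signature $(n+2,0)$. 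That single identity disposes of all $(A,0)$ with $A\ge2$. Two smaller points: your base pairs come from Proposition 4.2, not Proposition 4.1 (in Proposition 4.1 the factor $q$ is positive semi-definite, hence not indefinite), and the indefiniteness of $q_d$ for general $d$ is not something a ``quick sign check'' establishes uniformly.
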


\begin{proof} First we consider the three exceptional pairs. The signature pair is $(0,0)$ only if $r=0$.
An indefinite polynomial is not the zero polynomial. Since the product of nonzero polynomials is nonzero, this case is trivial. 
Proposition 2.2 shows that a product can have rank $1$ only if each factor does. Since an indefinite polynomial must have rank at least $2$,
the cases $(1,0)$ and $(0,1)$ are also ruled out. 

Conversely suppose that $(A,B)$ is not one of these three pairs.
We will provide explicit formulas, writing $t= x_1 = |z_1|^2$ and putting $|z_2|^2=1$.
Our examples will be expressed as polynomials in one real variable; in each case we compose with the moment map $m$
as above and set $p = P \circ {\bf m}$. Then ${\bf S}(P) = {\bf s}(p)$. 

First we consider pairs of the form $(N,0)$. Put $a = \sqrt{2}$. By (7.4) we have

$$ 1+ t^{12} = (1 - a t + t^2) (1 + a t + t^2 - t^4 - a t^5 - t^6 + t^8 + a t^9 + t^{10}) = r_1(t) r_2(t). \eqno (10) $$
In (10) the factors of $1+t^{12}$ have
signature pairs $(2,1)$ and $(6,3)$, and hence each defines an indefinite form.
Their product determines a Hermitian form with signature pair $(2,0)$.

To get an example where the product has signature pair $(n+2,0)$, we first define
$r_1$ and $r_2$ as in (10) and then write

$$ (1+t^{12})^{n+1} = (1+t^{12})^n r_1(t) r_2(t) = q_1(t) r_2(t). \eqno (11) $$
The left-hand side of (11) has signature pair $(n+2,0)$. On the other hand the first
two terms in the expansion of $q_1(t) = (1+t^{12})^n r_1(t)$ are $1 - a t$, and hence $q_1$ has terms of both signs.
Thus both $q_1$ and $r_2$ determine indefinite forms, and yet ${\bf S}(q_1 r_2) = (n+2,0)$.

Next, for $\epsilon >0$ to be chosen, consider the polynomial
$$ r(t) = (1-t)^2 (1+ \epsilon t)^n = (1-t) r_2(t). \eqno (12) $$
Note that $r$ is of degree $n+2$. If $\epsilon$ is
 sufficiently small, then all of the coefficients of $r$
are non-zero, and only the coefficient of $t$ is negative.
 Hence ${\bf S}(r) = (n+1,1)$. The factor $1-t$ 
corresponds to an indefinite form;
for $n\ge 1$ the factor
 $r_2$ given by $r_2(t) = (1-t)(1+\epsilon t)^n$ also corresponds to an
 indefinite form when $n \epsilon < 1$. Thus we have a product
of indefinite forms whose signature pair is $(k,1)$ for $k \ge 2$.

We obtained the pair $(1,1)$ in Proposition 4.2 via formula 
(7.2).

$$ 1- t^6 = (1+t-t^3-t^4)(1-t+t^2). \eqno (13) $$
The signature pair of the product
is $(1,1)$ and each factor is indefinite. 

We next obtain products of indefinite factors
 such that the signature pair of the product is $(A,B)$ where $A\ge B \ge 2$.
Because we can multiply by $-1$, we also obtain such products with signature pairs $(A,B)$ with $B \ge A \ge 2$.
Define $r_1(t) = 1 \pm t + \pm t^2 + ...+ t^d$, where at least one sign is negative. If there are $a$ positive terms,
we have ${\bf S}(r_1) = (a,d+1-a)$ where $2 \le a \le d$.
Put $r_2(t) = 1 - \epsilon t$ for $\epsilon > 0$.  Then both $r_1$ and $r_2$ are indefinite.
If we choose $\epsilon$ sufficiently small, then we easily see that
${\bf S}(r_1 r_2) = (a,d+2-a)$. Hence we can achieve all signature pairs $(A,B)$ with $A\ge B \ge 2$ in this fashion.

We have now obtained all pairs except for the three exceptional cases considered in the first paragraph.
\end{proof}

One can obtain a deeper understanding of this result by introducing the inertia triple. To obtain $(2,0)$ for the signature
pair of a product of indefinite polynomials requires allowing enough vanishing eigenvalues.
Let us include the number of zero eigenvalues
in the notation. Then for example the inertia triple of the polynomial $|z_1|^{24} + |z_2|^{24}$
determined by $1+t^{12}$ is $(2,0,11)$; there are $11$ vanishing eigenvalues. We cannot find indefinite factors of polynomials with inertia
triple $(2,0,k)$ unless $k$ is large enough. By (10)  
it is possible when $k=11$. It is obviously
impossible when $k$ is small. We do not investigate this matter in this paper.

We do wish to provide an example where
the general case differs from the diagonal case regarding inertia triples.
We  give indefinite $p$ and $q$, with ${\bf in}(p)=(2,1,0)$, ${\bf in}(q)=(3,3,0)$, and ${\bf in}(pq) = 
(2,2,6)$.  

$$ p = z_1\bar{z}_3+z_2\bar{z}_2 + z_3\bar{z}_1 = |z_2|^2 + |{z_1 + z_3 \over \sqrt{2}}|^2 -  |{z_1 - z_3 \over \sqrt{2}}|^2. $$

$$ q = z_1^2\bar{z}_3^2 + z_2^2\bar{z}_2^2 + z_3^2\bar{z}_1^2 - z_1z_3\bar{z}_1\bar{z}_3 - z_1z_2\bar{z}_2\bar{z}_3 - z_2z_3\bar{z}_1\bar{z}_2. $$

$$ pq = z_1^3 \bar{z}_3^3 + z_2^3\bar{z}_2^3 + z_3^3\bar{z}_1^3 - 3z_1 z_2  z_3 \bar{z}_1 \bar{z}_2 \bar{z}_3 $$
$$ = |z_2^3|^2 + |{z_1^3 + z_3^3 \over \sqrt{2}}|^2 -|{z_1^3 - z_3^3 \over \sqrt{2}}|^2  - |\sqrt{3} z_1z_2z_3|^2. \eqno (14) 
$$

This  example is (up to a linear change of variables) the 
only one of its type possible. Notice, given the dimension, that the inertia triple specifies the  
degree.  Therefore, assuming that ${\bf in}(p) = (2,1,0)$, $p$ has to be
the defining equation for a sphere and $pq$ has the signature for a defining equation of
the hyperquadric $Q(2,1)$.  The second author verified in [L] that there is 
(up to a linear change of coordinates)  only one such map, and therefore it 
must be the one above.  Furthermore  $pq$ 
cannot be made  diagonal after any  linear change of 
coordinates. Therefore, if we restrict to diagonal forms, no such $p$ and $q$ exist.

On the other hand, if we  give no  restriction on the number of zero
eigenvalues,  many different examples (including diagonal ones) are 
possible.  As in Theorem 4.1  we can use  polynomials in one variable.
If we want to work in the
same dimension, in ${\mathbb P}^2$, we simply  assume that  the
example does not depend on the other variable.  In the  following
 example,  ${\bf S}(P) = (2,1)$, ${\bf S}(Q)=(3,3)$, and ${\bf 
S} (PQ)=(2,2)$.
$$ 
1-t^2-2t^6+t^7 = (1-t+t^2)(1+t-t^2-2t^3-t^4+t^5) $$
For the corresponding $p$ and $q$ in ${\mathbb P}^1$ we have
${\bf in}(p)=(2,1,0)$, ${\bf in}(q)=(3,3,0)$, and ${\bf in}(pq)=(2,2,4)$.
In ${\mathbb P}^2$ we have ${\bf in}(p)=(2,1,3)$, ${\bf in}(q)=(3,3,15)$, 
and ${\bf in}(pq)=(2,2,32)$.

We close this section by interpreting a result from [CD] in the language of inertia triples.
Let  $p$ be a bihomogeneous polynomial and assume that $p(z,{\overline z}) > 0$ on the unit sphere.
Then there is an integer $d$ such that $||z||^{2d} p(z, {\overline z})$ has inertia triple equal to
$(N,0,0)$. In other words, the product is a squared norm and its underlying Hermitian form is strictly
positive definite; there are no vanishing eigenvalues.

\section{CR Mappings}

We gather some remarks connecting our work with the study of CR mappings.
Let $f = {g \over h}$ be a rational mapping reduced to lowest terms.  Assume $f : S^{2n-1} \to 
S^{2N-1}$. 
We form the Hermitian symmetric polynomial $p = ||g||^2 - |h|^2$. Note that $f$ is constant
if and only if $p=0$. Note that $p$ vanishes on the sphere.  We homogenize 
using the $z_{n+1}$ variable
to obtain homogeneous holomorphic polynomials $g^*$ and $h^*$
 and a bihomogeneous $p^*$ such that

$$ p^*= ||g^*||^2 - |h^*|^2. \eqno (15) $$
Also, $p^*$ is divisible by $r$, where $r= ||z||^2 - |z_{n+1}|^2$.
If we assume that $h$ and the components of $g$ are linearly independent, then ${\bf s}(p^*) = (N,1)$ and this 
rational map $f : S^{2n-1} \to S^{2N-1}$ shows that $I(r) \cap {\mathcal H}(n;N,1)$ is non-empty.

Conversely, for given holomorphic homogeneous polynomials, suppose (15) holds and $p^*$ is divisible by $r$.
Set $z_{n+1}$ equal to $1$ to dehomogenize. Then ${g \over h}$ maps the unit sphere in its domain
to the unit sphere in its target. The trivial case when $f$ is a constant is ruled out by the assumption of linear independence.
For $n\ge 2$, the maximum principle guarantees that $h$ does not vanish on the closed ball,
and $f$ defines a rational proper mapping between balls. See Proposition 
5.1. We also mention, for $n\ge 2$,
that a sufficiently smooth CR mapping between spheres $S^{2n-1}$ and $S^{2N-1}$ must be the restriction
of a rational mapping. [F].

More generally, we can allow the target to be a hyperquadric. In this case there exist smooth CR mappings that are not rational.
An additional new phenomenon must be noted.
Consider a holomorphic mapping 
$(f,g)$ which maps the sphere to a hyperquadric. The analogue of (15) becomes
$$ ||f(z)||^2 - ||g(z)||^2 - 1. \eqno (16) $$
Without any assumption on linear independence, cancellation can occur in the squared norms in (16). We give an example when $n=2$
that illustrates the point. Consider the map $z \to \zeta(z) = (z_1,z_2, w(z), w(z))$. Then 

$$ |\zeta_1|^2 + |\zeta_2|^2 + |\zeta_3|^2 - |\zeta_4|^2 - 1 $$
vanishes identically on the sphere. Hence we obtain a holomorphic mapping from the sphere
to a hyperquadric. Since the holomorphic function $w$ is otherwise 
arbitrary, we have no control on the map $\zeta$.
By assuming that the components are linearly independent we eliminate this difficulty.

Without loss of generality we may assume that the components of a rational map are linearly independent.
The problem of describing rational holomorphic mappings from the sphere $S^{2n-1}$ 
to a hyperquadric $Q(A,B-1)$ is equivalent to analyzing the sets $I(r) \cap {\mathcal H}(n;A,B-1)$. 
The following well-known result applies when the target is a sphere.

\begin{proposition} Assume that there exists a non-constant Hermitian symmetric homogeneous polynomial $p$ on ${\bf C}^{n+1}$
such that ${\bf s}(p)=(k,1)$ where $k \ge 1$, and such that $p$ vanishes on the set $||z||^2 = |z_{n+1}|^2$.
Equivalently, suppose that $h=(h_1,...,h_k)$ is
 a non-constant rational mapping on ${\bf C}^n$,
 with linearly independent 
components and with  $||h(z)||^2 =1$ on the unit sphere. Then $k\ge n$. 
\end{proposition}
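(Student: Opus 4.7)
The plan is to dehomogenize to the rational map $h$, extend it holomorphically past the closed ball, and then rule out $k<n$ by a fiber-dimension argument. The $n=1$ case is trivial since $k\ge 1$ is part of the hypothesis, so I would assume $n\ge 2$.

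First, writing $h=g/q$ in lowest terms, the identity $||g||^2=|q|^2$ on $S^{2n-1}$ together with the maximum principle for $n\ge 2$ (the same principle invoked earlier in the section) forces $q$ to have no zeros on $\bar B^n$. Thus $h$ extends holomorphically to a neighborhood $U$ of $\bar B^n$. Each $|h_j|^2$ is plurisubharmonic, $||h||^2$ equals $1$ on the sphere, and so the maximum principle gives $h(\bar B^n)\subset \bar B^k$.

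Next I would show $h(B^n)\cap B^k\ne \emptyset$. Otherwise $||h||^2\equiv 1$ on $B^n$; differentiating $\sum_j h_j\bar h_j=1$ in $\bar z_l$ and then $z_k$ yields $\sum_j(\partial h_j/\partial z_k)\overline{(\partial h_j/\partial z_l)}\equiv 0$, i.e.\ the Gram matrix $(Dh)^*(Dh)$ vanishes identically. Hence $Dh\equiv 0$, so $h$ is constant, contradicting the hypothesis.

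Now suppose for contradiction that $k<n$. Pick $w_0\in h(B^n)\cap B^k$ with $z_0\in B^n$ satisfying $h(z_0)=w_0$. The fiber $h^{-1}(w_0)$ is a closed analytic subset of $U$ (being cut out by $h_1-w_{0,1},\dots,h_k-w_{0,k}$), and the standard dimension inequality for holomorphic maps into ${\bf C}^k$ shows that every irreducible component through $z_0$ has dimension at least $n-k\ge 1$. Let $V$ be one such component. Being connected and meeting $B^n$, either $V\cap S^{2n-1}=\emptyset$---in which case $V\subset B^n\subset \bar B^n\subset U$, so $V$ is closed in $U$ and contained in the compact set $\bar B^n$, hence compact, contradicting the fact that ${\bf C}^n$ has no positive-dimensional compact analytic subsets (the coordinate functions would otherwise attain interior maxima)---or there exists $z_1\in V\cap S^{2n-1}$, in which case $h(z_1)=w_0\in B^k$ contradicts $h(S^{2n-1})\subset S^{2k-1}$, which would require $||w_0||=1$. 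Either way we reach a contradiction, so $k\ge n$.

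The main obstacle is the fiber compactness step: one must use that $V$ is closed in the larger open set $U$, not merely in $B^n$, so that its containment in the compact set $\bar B^n$ upgrades to genuine compactness of $V$, allowing the maximum principle on coordinate functions to rule out positive-dimensional compact analytic subsets of ${\bf C}^n$. The rest is bookkeeping of the plurisubharmonic maximum principle and a standard differentiation argument.
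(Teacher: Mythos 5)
Your proposal is correct and follows essentially the same route as the paper: both arguments reduce to the fact that a non-constant map taking $S^{2n-1}$ to $S^{2k-1}$ extends to a proper holomorphic map of balls whose fibers are compact positive-dimensional analytic varieties if $k<n$, which cannot exist in ${\bf C}^n$. You have merely unpacked the details (denominator nonvanishing, the dichotomy for a fiber component, and the degenerate case $||h||^2\equiv 1$) that the paper leaves implicit.
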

\begin{proof}  First we observe that the two statements are equivalent. 
The first statement guarantees that there is a polynomial map $f:{\bf C}^{n+1} \to {\bf C}^k$ and a polynomial $g$ on ${\bf C}^{n+1}$
such that $p= ||f||^2 - |g|^2$. Divide by $g$ and set $z_{n+1} =1$ to dehomogenize. We obtain a rational mapping $h$ 
sending the unit sphere $S^{2n-1}$ to $S^{2k-1}$. Conversely, given $h$ we let $G$ be the least common denominator
of its components and we write $h = {F \over G}$. After homogenization of $F$ and $G$ to $f$ and $g$, we 
put $p=||f||^2 - |g|^2$. Thus the two statements are equivalent.

To show that $k \ge n$ we prove an apparently stronger assertion. Suppose $h=(h_1,...,h_k)$ is a holomorphic mapping defined
near the unit sphere, and $||h(z)||^2 =1$ on the unit sphere $S^{2n-1}$. If $n=1$, there is nothing to prove.
If $n \ge 2$, then either $h$ is a constant or
$h$ extends to a proper holomorphic mapping from the unit ball to the unit ball. We are assuming that $h$ is not constant.
If $h$ is proper holomorphic, then the inverse image of a point is both
a compact set and a complex variety. Hence the target dimension $k$ cannot be smaller than the domain dimension $n$.
\end{proof}

We recall the notion of {\rm degree estimate} for rational mappings between spheres in preparation
for our applications to CR geometry. See Corollary 8.2 for the situation 
when the target is a hyperquadric.

\begin{definition} A {\rm degree estimate} for rational mappings between spheres
is a number $c(n,N)$ with the following property. Whenever $f:S^{2n-1} \to S^{2N-1}$ is rational
and of degree $m$, then $m \le c(n,N)$. \end{definition}

No such estimates hold when $n=1$. Otherwise such estimates do hold, 
but {\it sharp} degree estimates are not known in general. See [DKR], [DLP], [DL], [LP], [M] for various results in this direction.
We mention here only the following degree estimate from [DL]; for $n \ge 2$, the degree of a rational mapping from $S^{2n-1}$ to
$S^{2N-1}$ is bounded above by ${N(N-1) \over 2(2n-3)}$.

\section{projective degree}

Suppose that $p$ is bihomogeneous of degree $(m,m)$.
Let $J={\mathcal H}(n;A,B) \cap I(p)$ and assume that $J$ is non-empty. To study the complexity of $J$ we are led to the notion
of projective degree. Given two Hermitian symmetric polynomials $p$ and $q$, we say that they are equivalent, writing $p \sim q$,
if their ratio equals $|h(z)|^2$ for some rational $h$. Note that $h$ is independent of ${\overline z}$. Thus

$$ p \sim q \iff |u(z)|^2 p(z, {\overline z}) = |v(z)|^2 q(z, {\overline z}), $$
for holomorphic polynomials $u$ and $v$.

\begin{definition} The {\it projective degree} of $p$ is the smallest 
$m$ such that there is a bihomogeneous polynomial of degree $(m,m)$ equivalent to $p$.
Equivalently, it is the degree of the rational map $f\oplus g :{\bf P}^n \to {\bf P}^{A+B-1}$ induced by $p$. 
We write ${\mathcal D}(p)$ for the projective degree. \end{definition}

\begin{definition} Let $S$ be a non-empty set of Hermitian symmetric polynomials. 
We write ${\mathcal D}^*(S)$   for ${\rm sup}_{p \in S} {\mathcal D} (p)$.
 \end{definition}

To find the projective degree of the bihomogeneous polynomial $p = ||f||^2 - ||g||^2$,
we divide out all common factors of the form $|h|^2$, for $h$ a holomorphic homogeneous polynomial.
The result is bihomogeneous of degree $(m,m)$ where $m$ is the projective degree of $p$.

Consider Hermitian symmetric polynomials taking the value $1$ on the unit sphere. 
Certain signature pairs allow for examples of arbitrary degree, other signature pairs allow only examples whose degrees
form a bounded set,  and some
signature pairs rule out all examples. See Theorem 8.2 for a general 
result along these lines.

\begin{example} Consider Hermitian symmetric polynomials $p$,$q$, and $r$  given by
$$ p(z, {\overline z}) = |z_1|^{2m+2} + |z_1|^{2m} |z_2|^2 -|z_1|^{2m}|z_3|^2  = |z_1|^{2m} (|z_1|^2 + |z_2|^2 -|z_3|^2), 
\eqno (17.1) $$
$$ q(z, {\overline z}) = |z_1|^{2m+2} + |z_1|^{2m} |z_2|^2 +|z_3|^{2m+2} - |z_1|^{2m} |z_3|^2, \eqno (17.2) $$
$$ r(z, {\overline z}) = {-|z_3|^2 \over 2} + {3(|z_1|^{2} + |z_2|^2) \over 2}. \eqno (17.3) $$
Note that $p=0$ on the unit sphere and $q=r=|z_3|^2$ on the unit sphere.
The projective degrees of $p$ and $r$ equal $2$, and the projective degree of $q$ is $2m+2$. 
We have ${\bf s}(p)= {\bf s}(r) = (2,1)$ and ${\bf s}(q)=(3,1)$.
Thus there is a Hermitian polynomial of arbitrarily large
projective degree that is identically one on the sphere and has signature pair $(3,1)$. If a function
is identically 1 on the sphere and has signature pair $(2,1)$, however, then its projective degree is at most $2$.

\end{example}

\section{Domain dimensions one and two}

We consider rational maps from  $S^{2n-1}$ to a hyperquadric.
Given the signature pair for the target 
hyperquadric we want to know whether non-trivial maps exist; 
if so, we ask how complicated they can be.  
Recall that $$ r(z,{\overline z}) = ||z||^2 - |z_{n+1}|^2 = \sum_{j=1}^n |z_j|^2 - |z_{n+1}|^2. $$
In order to prepare for the general stability result,
 we will prove a stronger statement, Theorem 7.1, in two dimensions.
For completeness we also consider the one-dimensional case.

Given $n,A,B$ we want to understand the sets $J={\mathcal H}(n;A,B) \cap I(r)$.  
In all dimensions ${\mathcal H}(n;0,0) \cap I(r) = \{0\}$. 
The only significant difference when $n=1$ is that this set is the {\it only} non-empty such $J$ for which 
${\mathcal D}^*(J)$ is finite.

\begin{proposition} Put $n=1$. The following assertions hold:
\begin{itemize} 
\item ${\mathcal H}(1;0,0) \cap I(r) = \{0\}$.

\item ${\mathcal H}(1;k,0) \cap I(r)$ and ${\mathcal H}(1;0,k) \cap I(r)$ are empty for all $k$.

\item In all other cases ${\mathcal D}^*\left( {\mathcal H}(1;a,b) \cap I(r) \right)$ is infinite.
\end{itemize}
\end{proposition}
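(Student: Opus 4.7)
The plan is to handle the three bullets in order. The first is immediate: ${\bf s}(p)=(0,0)$ forces every eigenvalue of the coefficient matrix to vanish, so $p \equiv 0$, and $0 \in I(r)$.

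For the second bullet, I would argue by contradiction. Suppose $p \in {\mathcal H}(1;k,0)\cap I(r)$ with $k \ge 1$. Then $p = ||f||^2$ for some holomorphic polynomial $f : {\bf C}^2 \to {\bf C}^k$ with $k$ linearly independent nonzero components. Since $p \in I(r)$, $p$ vanishes on $\{r=0\}=\{|z_1|=|z_2|\}$, which contains the $2$-torus $T = \{(e^{i\theta_1},e^{i\theta_2})\}$. So $||f||^2 \equiv 0$ on $T$, forcing $f \equiv 0$ on $T$. Since the monomials $z_1^I z_2^J$ are mutually orthogonal on $T$ (Fourier expansion, or uniqueness on the distinguished boundary of the bidisk), any holomorphic polynomial vanishing on $T$ is zero, so $f \equiv 0$, contradicting $p \ne 0$. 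The $(0,k)$ case follows by replacing $p$ by $-p$.

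For the third bullet, fix $(a,b)$ with $a,b \ge 1$ and any integer $N \ge a+b-1$. I would choose distinct indices $0 = j_0 < j_1 < \cdots < j_{a+b-1} = N$ from $\{0,\ldots,N\}$ and nonzero reals $\lambda_0,\ldots,\lambda_{a+b-1}$ containing exactly $a$ positive and $b$ negative entries with $\sum_i \lambda_i = 0$ (for instance, $a$ copies of $+1$ and $b$ copies of $-a/b$). Set
$$ P(u,v) = \sum_{i=0}^{a+b-1} \lambda_i\, u^{j_i} v^{N-j_i}, \qquad p^*(z,\bar z) = P(|z_1|^2, |z_2|^2). $$
Since $P(u,u) = u^N \sum_i \lambda_i = 0$, we have $(u-v) \mid P(u,v)$, hence $r \mid p^*$, so $p^* \in I(r)$. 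The polynomial $p^*$ is bihomogeneous of degree $(N,N)$ and diagonal in the monomial basis, so by the correspondence ${\bf s}(p^*) = {\bf S}(P)$ from Section 4, ${\bf s}(p^*) = (a,b)$. Because $\lambda_0$ and $\lambda_{a+b-1}$ are nonzero, both $z_2^N$ and $z_1^N$ appear among the components of the rational map $f \oplus g : {\bf P}^1 \to {\bf P}^{a+b-1}$ induced by $p^*$; these have no common factor, so the map has degree exactly $N$ and ${\mathcal D}(p^*) = N$. Letting $N \to \infty$ shows ${\mathcal D}^*({\mathcal H}(1;a,b) \cap I(r)) = \infty$.

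The main obstacle is the second bullet, where one must be careful to distinguish positive semi-definiteness of $p$ as a Hermitian form from nonnegativity of $p$ as a function; as the paper emphasizes, an indefinite polynomial can be pointwise $\ge 0$ (e.g. $r^2$ has signature $(2,1)$), so one cannot argue purely from real positivity. Reducing to the uniqueness of holomorphic polynomials on the torus $T$ sidesteps this subtlety cleanly. The third bullet is essentially a one-real-variable construction lifted through the moment map, and the only care required is keeping the endpoint coefficients nonzero so the projective degree matches the bihomogeneous degree.
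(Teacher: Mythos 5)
Your proposal is correct and follows essentially the same route as the paper: the third bullet is exactly the paper's construction of diagonal bihomogeneous polynomials of arbitrary degree via the moment map (the paper starts from $|z_1|^{2m}-|z_2|^{2m}$ and says the general $(a,b)$ case is then easy to write down, which is precisely what your explicit $\sum_i \lambda_i u^{j_i}v^{N-j_i}$ does), and your torus argument merely fills in the second bullet that the paper dismisses as trivial. No gaps.
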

\begin{proof} The first two statements are trivial. The third statement is easy.
First note that $|z_1|^{2m}- |z_2|^{2m} \in {\mathcal H}(1;1,1) \cap I(r)$ and its projective degree is $m$.
We then easily write down bihomogeneous polynomials $p$ of arbitrary degree in $I(r)$ with ${\bf s}(p) = (a,b)$
for any pair $(a,b)$ of positive integers.
\end{proof}

In this section we henceforth assume $n=2$ and thus $r = |z_1|^2 +|z_2|^2 - |z_3|^2$.

\begin{lemma} For each integer $k$ with $k \ge 2$, the set ${\mathcal H}(2;k,1) \cap I(r)$ is nonempty. \end{lemma}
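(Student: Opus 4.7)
The plan is to exhibit, uniformly for each $k \ge 2$, a single explicit element of $\mathcal{H}(2;k,1) \cap I(r)$, by reusing the family of polynomials $p_d$ already introduced immediately after Proposition 4.2. Given $k \ge 2$, I would set $m = k-2 \ge 0$ and $d = 2m+1$, and take $p_d$ to be the bihomogenization (in the $z_3$ variable) of $f_d(|z_1|^2, |z_2|^2) - 1$, where $f_d$ is the polynomial from Lemma 3.3. Divisibility by $r$ is immediate from Lemma 3.3, which asserts that $x+y-1$ divides $f_d - 1$: after the substitution and homogenization this becomes $r \mid p_d$, so $p_d \in I(r)$.

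The remaining task is to verify ${\bf s}(p_d) = (k,1)$, a signature claim already recorded (without proof) in the discussion following Proposition 4.2. To check it, I would write $f_d = (a^d + b^d) + (-1)^{d+1}y^d$, where $a,b$ are the roots of $t^2 - xt - y$, so that $a+b = x$ and $ab = -y$. The Newton recurrence $a^n+b^n = x(a^{n-1}+b^{n-1}) + y(a^{n-2}+b^{n-2})$, together with a short induction, shows that for odd $d = 2m+1$ the power sum $a^d + b^d$ is a polynomial in $x,y$ with exactly $m+1$ nonzero monomials, all with positive coefficients. Adding $(-1)^{d+1}y^d = +y^d$ yields the $(m+2)$-term polynomial $f_d$, still all positive; subtracting $1$ contributes a single negative constant that becomes $-|z_3|^{2d}$ after homogenization. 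Because the $m+2$ positive monomials and the one negative monomial have distinct multi-degrees in $(|z_1|^2, |z_2|^2, |z_3|^2)$, no cancellation occurs in the resulting diagonal Hermitian form, so ${\bf s}(p_d) = (m+2,1) = (k,1)$.

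The base case $k=2$ falls out of the same construction: with $d=1$, $f_1 - 1 = x+y-1$ bihomogenizes to exactly $r$ itself, and ${\bf s}(r) = (2,1)$. I anticipate no serious obstacle here; the only point requiring any care is the Newton-identity count of positive monomials of $f_d$ for odd $d$, and that is a routine induction. Everything else is either immediate or already supplied by Lemma 3.3.
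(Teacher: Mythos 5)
Your proof is correct, but it reaches the conclusion through a different family of polynomials than the paper does. The paper's own proof uses the Whitney polynomials $W_d(x,y)=x^d+x^{d-1}y+\cdots+y$, which satisfy $W_d(x,1-x)=1$; since $W_d-1$ visibly has $d+1$ positive monomials and one negative constant, the signature $(d+1,1)$ is read off with no computation at all, and one gets $(k,1)$ in degree $k-1$. You instead use the polynomials $f_d$ of Lemma 3.3, whose signature claim ${\bf s}(p_d)=(m+2,1)$ the paper records only as an unproved remark after Proposition 4.2; your Newton-recurrence argument (with $a+b=x$, $ab=-y$, $s_n=xs_{n-1}+ys_{n-2}$) correctly shows that for odd $d=2m+1$ the power sum $s_d$ has $m+1$ positive monomials $x^{d-2j}y^j$, $0\le j\le m$, so that $f_d-1$ contributes $m+2$ positive and one negative diagonal entry after homogenization, and divisibility by $r$ follows from Lemma 3.3 exactly as you say. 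The trade-offs: your route realizes $(k,1)$ only in degree $2k-3$ rather than $k-1$ and requires the induction on power sums, but it has the side benefit of actually substantiating the signature assertion about $p_d$ that the paper leaves unverified; the Whitney route is shorter and degree-minimal, which is why the paper prefers it. Both are valid for the nonemptiness claim, since the lemma asks for no control on the degree.
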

\begin{proof} Assume $d$ is a positive integer.
Consider the real {\it Whitney} polynomials used in [D1] and [DLP] defined by
$$ W_d (x,y) = x^d + x^{d-1} y + x^{d-2} y + ... + y. $$
We note that $W_d(x,1-x)=1 $ for all $x$, because the finite geometric series gives:
$$ W_d(x,y) = x^d + y { 1-x^d \over 1-x}. $$
Let $w_d(x,y,\zeta)$ be the homogenized version of $W_d -1$. Then $w_d$ vanishes on the set $x+y - \zeta = 0$,
and hence there is a polynomial $g(x,y,\zeta)$ such that
$$ w_d(x,y,\zeta) = g(x,y,\zeta) (x+y- \zeta). \eqno (18) $$
Set $x=|z_1|^2$, $y=|z_2|^2$, and $\zeta = |z_3|^2$ in (18).
The left-hand side of (18) becomes a Hermitian symmetric polynomial $p$ lying in $I(r)$. 
Furthermore its underlying Hermitian matrix is diagonal
and has precisely $d+1$ positive eigenvalues and one negative eigenvalue. Thus ${\bf s}(p) = (d+1,1)$.
Put $k=d+1$ to complete the proof. \end{proof}

\begin{corollary} Let $k\ge 2$. For each pair $(a,b)$ of nonnegative integers
consider the pairs 
$$ (A,B) = (k,1) + a(2,1) + b(1,2). \eqno (19.1) $$
$$ (A,B) = (1,k) + a(2,1) + b(1,2). \eqno (19.2) $$
For all such $(A,B)$ the set $J={\mathcal H}(2;A,B) \cap I(r)$ is non-empty.
Furthermore, if either $a$ or $b$ is positive, then ${\mathcal D}^*(J) = \infty$.
(In other words, there are elements in $J$ of arbitrarily large projective degree.)
\end{corollary}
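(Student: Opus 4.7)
The plan is to treat non-emptiness and unboundedness simultaneously by constructing, for each target signature pair $(A,B)$ and each integer $D$ sufficiently large, an explicit $p_D \in {\mathcal H}(2;A,B) \cap I(r)$ of bihomogeneous degree $(D,D)$ and projective degree $D$. When $a=b=0$ the polynomial from Lemma 7.1 (respectively its negative, for $(1,k)$) already gives non-emptiness, so the interesting case is $a+b\ge 1$.

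I will start from $p_0 \in {\mathcal H}(2;k,1) \cap I(r)$, the polynomial of Lemma 7.1, which is diagonal in the monomial basis and bihomogeneous of some degree $(d_0,d_0)$. The additional building blocks will be $r \cdot |z^\alpha|^2$, of signature $(2,1)$ with three-element monomial support $\{z_1 z^\alpha,\, z_2 z^\alpha,\, z_3 z^\alpha\}$, and $(-r)\cdot|z^\beta|^2$, of signature $(1,2)$. Each lies in $I(r)$ and is diagonal in the monomial basis. For each $D$ I set
$$
p_D \ = \ p_0 \cdot |z^{\gamma}|^2 \ + \ \sum_{i=1}^{a} r \cdot |z^{\alpha_i}|^2 \ + \ \sum_{j=1}^{b} (-r) \cdot |z^{\beta_j}|^2,
$$
choosing $\gamma$ of total degree $D-d_0$ and each $\alpha_i,\beta_j$ of total degree $D-1$, so every summand is bihomogeneous of degree $(D,D)$ and lies in $I(r)$.

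The key will be to select the exponent vectors so that the monomial supports of the $1+a+b$ summands are pairwise disjoint; then the Hermitian matrix of $p_D$ in the monomial basis becomes a block-diagonal direct sum, forcing ${\bf s}(p_D) = (k,1) + a(2,1) + b(1,2) = (A,B)$. Because the monomial space of degree $D$ in three variables has dimension $\binom{D+2}{2}$, a concrete choice works for all $D$ past some threshold depending on $k, a, b$: place $\gamma$ as a large power of $z_3$ and spread the $\alpha_i,\beta_j$ as well-separated multi-indices living mostly in the $z_1,z_3$-plane. To upgrade from non-emptiness to ${\mathcal D}^*(J)=\infty$, I will also arrange that the squared-norm decomposition $p_D = ||f||^2 - ||g||^2$ admits no common holomorphic factor. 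This is done by including among the exponent choices a block that contributes the pure power $z_1^D$ (for instance the block $r \cdot |z_1^{D-1}|^2$, or $(-r)\cdot|z_1^{D-1}|^2$ when $a=0$), and by retaining the pure power $z_3^D$ (from $-|z_3^{d_0}|^2$ in $p_0$ after multiplication by $|z_3^{D-d_0}|^2$). Any common holomorphic divisor of the components of $f \oplus g$ must then divide both $z_1^D$ and $z_3^D$, hence is constant, so ${\mathcal D}(p_D) = D$.

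Letting $D \to \infty$ yields ${\mathcal D}^*(J) = \infty$. The case $(A,B) = (1,k) + a(2,1) + b(1,2)$ is handled identically with $-p_0 \in {\mathcal H}(2;1,k) \cap I(r)$ in place of $p_0$. The main obstacle is the combinatorial bookkeeping: one must pick the exponents so that the monomial supports of all summands are simultaneously disjoint and the combined components of $f\oplus g$ share no common holomorphic factor. Both are routine once $D$ is chosen large, thanks to the ample room in the degree-$D$ monomial lattice.
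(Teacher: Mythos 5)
Your proposal is correct and is essentially the paper's argument: both start from the Lemma 7.1 element of signature $(k,1)$ and adjoin $a+b$ blocks of the form $\pm r\,|z^{\alpha}|^2$ (signature $(2,1)$ or $(1,2)$) whose monomial supports are kept disjoint so that the signatures add and the degree can be pushed arbitrarily high. The only cosmetic difference is that you build the sum in one step at a fixed large degree $D$ (and you are slightly more explicit about why the projective degree equals $D$), whereas the paper adds one block at a time, increasing the degree at each stage.
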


\begin{proof} Let $p$ be a Hermitian symmetric polynomial.
Assume that $p= rv$ and ${\bf s}(p) = (K,L)$. Choose an integer $m$ much larger than the degree of $v$.
Define $q$ by 
$q = r (Hv \pm |z_1|^{2m})$,
where $Hv$ denotes the homogenization of $v$. Because of the restriction on degree, there is no cancellation or collapse of rank.
We see that ${\bf s}(q) = (K+2,L+1)$ if we choose the plus sign and ${\bf s}(q)= (K+1,L+2)$
if we choose the minus sign. Thus, given any element in ${\mathcal H}(2;K,L) \cap I(r)$,
we can construct elements in both ${\mathcal H}(2;K+2,L+1) \cap I(r)$ and
${\mathcal H}(2;K+1,L+2) \cap I(r)$. Applying this procedure (with large enough degrees at each stage)
$a$ times using the plus sign and $b$ times using the minus
sign proves the corollary. \end{proof}

The only pairs $(A,B)$ with $A,B \ge 1$ not included in Lemma 7.1 and 
Corollary 7.1 are
$(1,1)$, $(2,2)$, $(3,2)$, $(2,3)$, $(4,4)$. It is evident that
${\mathcal H}(2;1,1)$ is empty. We wish to analyze the other cases. First we give
an instructive example.

\begin{example} Let $\lambda = (A,B,C)$ be a point in ${\bf R}^3$. Define a family of polynomials $h_\lambda(x,y,\zeta)$ given by
$$ h_\lambda (x,y,\zeta) =(Ax + By + C \zeta)(x+y-\zeta). \eqno (20) $$
As usual we set $x=|z_1|^2$, $y=|z_2|^2$, and $\zeta= |z_3|^2$ in (20)
to obtain homogeneous Hermitian symmetric polynomials $p_\lambda$. It follows 
from (20) that $p_\lambda \in I(r)$. 

It is easy to compute the signature ${\bf s}(p_\lambda)$
for different values of $\lambda$. For example, if $A=0$ and $0 < C < B$, we obtain
${\bf s}(p_\lambda) = (3,2)$. If $A=0$ and $0 < C = B$, then ${\bf s}(p_\lambda) = (3,1)$.
If $C > A \ge B > 0$, then ${\bf s}(p_\lambda) = (5,1)$.
If $A=C > B > 0$, then ${\bf s}(p_\lambda) = (4,1)$.
If $B > C > A > 0$, then ${\bf s}(p_\lambda) = (4,2)$. 
Replacing $\lambda$ by $-\lambda$ replaces $p_\lambda$ by $-p_\lambda$, and
replaces the signature pair $(a,b)$ with $(b,a)$.  
Not all pairs are possible; for example, one cannot get $(2,1)$ for any values of $\lambda$.

In particular we note that the sets ${\mathcal H}(2;A,B) \cap I(r)$ are nonempty when $(A,B)=(3,2)$ or when $(A,B)=(2,3)$. \end{example}

\begin{corollary} ${\mathcal H}(2;4,4) \cap I(r)$ is nonempty and ${\mathcal D}^*\left( {\mathcal H}(2;4,4) \cap I(r) \right)= \infty$. \end{corollary}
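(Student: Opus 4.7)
The plan is to combine Example 7.1 (which produces an element of signature $(3,2)$ in $I(r)$) with the extension procedure used in the proof of Corollary 7.1 (which, applied with the minus sign, shifts signature by $(1,2)$). Since $(3,2)+(1,2)=(4,4)$, a single application of the procedure should land in $\mathcal{H}(2;4,4)\cap I(r)$, and varying the degree parameter should give unbounded projective degree.

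First I would start from Example 7.1 with $A=0$ and $0<C<B$, which yields $p_\lambda = r\cdot v \in \mathcal{H}(2;3,2)\cap I(r)$ with $v = B|z_2|^2 + C|z_3|^2$. For each integer $m\ge 2$, I set $Hv = |z_3|^{2(m-1)}\, v$ (the homogenization of $v$ to bidegree $(m,m)$) and define
$$q_m \;=\; r\cdot(Hv - |z_1|^{2m}) \;=\; r\cdot Hv \;-\; r\cdot |z_1|^{2m}.$$
Then $q_m\in I(r)$ automatically. Because $m\ge 2$ is larger than the degree of $v$, the monomials appearing in $r\cdot Hv$ (all of which have $|z_1|$-degree at most $2$) are disjoint from those in $r\cdot |z_1|^{2m}$ (which have $|z_1|$-degree $2m$ or $2(m+1)$). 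Hence the diagonal Hermitian matrices add, and $\mathbf{s}(q_m) = \mathbf{s}(r\cdot Hv) + \mathbf{s}(-r\cdot|z_1|^{2m}) = (3,2)+(1,2) = (4,4)$. This establishes non-emptiness.

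For $\mathcal{D}^*=\infty$, I would argue that $\mathcal{D}(q_m) = m+1$, which is unbounded. Write $q_m = \|f\|^2 - \|g\|^2$ using the diagonal decomposition: the components of $g$ are the linearly independent monomials corresponding to the negative diagonal entries, and among these appear both $z_1^{m+1}$ (coming from the $-|z_1|^{2(m+1)}$ term contributed by $-r\cdot |z_1|^{2m}$) and $z_3^{m+1}$ (coming from the $-C|z_3|^{2(m+1)}$ term in $r\cdot Hv$). If $q_m \sim q'$ for some bihomogeneous $q'$ of bidegree $(k,k)$ with $k<m+1$, then $|u|^2 q_m = |v|^2 q'$ for polynomials $u,v$, forcing a common holomorphic factor among all components of $f\oplus g$. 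But $\gcd(z_1^{m+1}, z_3^{m+1}) = 1$, so no such nontrivial factor exists, and $\mathcal{D}(q_m) = m+1 \to \infty$.

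The main obstacle is the projective-degree lower bound: it is easy to write down a polynomial of signature $(4,4)$ divisible by $r$, but one must rule out hidden cancellations that could reduce the projective degree. The GCD argument above handles this cleanly precisely because the high-degree negative monomials $z_1^{m+1}$ and $z_3^{m+1}$ are coprime, and the construction $q_m = r\cdot Hv - r\cdot |z_1|^{2m}$ guarantees they both appear.
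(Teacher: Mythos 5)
Your proposal is correct and is essentially the paper's own argument: the paper proves this corollary in two lines by citing Example 7.1 (with $A=0$, $0<C<B$) for a $(3,2)$ element and then applying the construction $q=r(Hv-|z_1|^{2m})$ from the proof of Corollary 7.1 to shift the signature by $(1,2)$. Your write-up simply makes explicit the details the paper leaves implicit, notably the verification that the projective degree is exactly $m+1$ via the coprimality of the component monomials $z_1^{m+1}$ and $z_3^{m+1}$.
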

\begin{proof} Example 7.1 shows that ${\mathcal H}(2;3,2) \cap I(r)$ is 
nonempty. Corollary 7.1 then shows that ${\mathcal 
H}(2;4,4) \cap I(r)$ is nonempty and ${\mathcal D}^*\left( {\mathcal H}(2;4,4) \cap I(r)\right)= \infty$.
\end{proof}

\begin{example} ${\mathcal H}(2;2,2) \cap I(r)$ is nonempty.
Define a real polynomial $h$ by
$$ h(x,y) = x^2-y^2 - x + y = (x-y)(x+y-1).  \eqno (21) $$
As usual we bihomogenize $h \circ {\bf m}$ to  obtain  a Hermitian 
symmetric polynomial $p$ with ${\bf s}(p)= (2,2)$.
By (21),  $p$ lies in $I(r)$. For clarity we write the formula for $p$:

$$ p = (|z_1|^2 - |z_2|^2)( |z_1|^2 + |z_2|^2 - |z_3|^2). \eqno (22) $$ 
The signature pairs of the factors are $(1,1)$ and $(2,1)$ and yet ${\bf s}(p) =(2,2)$. 
\end{example}

The classification in [L], which relies on Faran's work [Fa1],
includes finding all elements of ${\mathcal H}(2;2,2) \cap I(r)$. 
In the notation of this paper, the main result in [F] implies that ${\mathcal D}^* \left( {\mathcal H}(2;3,1) \cap I(r) \right)=3$
and the main result in [L] states that ${\mathcal D}^* \left( {\mathcal H}(2;2,2) \cap I(r) \right)=3$.
Combining the results from [Fa1], [L] and this section
yields an almost complete answer in domain dimension two.

\begin{theorem} Let $r=|z_1|^2 + |z|^2 - |z_3|^2$ and let
 $J= {\mathcal H}(2;A,B) \cap I(r)$. Then 
\begin{itemize}
\item $J=\{0\}$ if $(A,B)=(0,0)$.

\item $J$ is empty 
\begin{itemize} 
\item if $(A,B) = (1,1)$, or
\item if $A=0$ or $B=0$ but $A+B \ne 0$. 
\end{itemize}
\item Let $(A,B)=(2,1)$ or $(1,2)$. Then ${\mathcal D}^*(J)=1$ and hence is finite. 
\item Let $(A,B)=(2,2)$, or $(3,1)$ or $(1,3)$. Then ${\mathcal D}^*(J)=3$ and hence is finite. 
\item Suppose $A, B \ge 2$ but $A+B \ge 6$. Then ${\mathcal D}^* (J)=\infty$.
\item Suppose $A=1$ or $B=1$ but $AB > 1$. Then ${\mathcal D}^*(J)$ is finite. 
\end{itemize}\end{theorem}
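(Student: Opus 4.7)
The plan is to dispatch the six bullets in sequence, using the structural results of earlier sections together with two classical inputs. The empty and trivial cases are self-contained. The case $(0,0)$ is immediate. For signatures $(A,0)$ or $(0,B)$ with $A+B > 0$, any candidate $p = \pm \|f\|^2$ lying in $I(r)$ would force each holomorphic polynomial component $f_j$ to vanish on the real hypersurface $\{r=0\} \subset {\bf C}^3$; since the zero locus of a nonzero holomorphic polynomial on ${\bf C}^3$ has real dimension at most $4$ while $\{r=0\}$ has real dimension $5$, we conclude $f \equiv 0$, contradicting the nonzero signature. For $(1,1)$, an element $p = |f|^2 - |g|^2 \in I(r)$ produces a rational function of modulus one on $S^3$, i.e., a rational mapping of the ball to the disk sending $S^3 \to S^1$ with one linearly independent component; Proposition 5.1 with $k = 1 < n = 2$ forces this mapping to be constant, after which $p \equiv 0$ and hence $J$ contains no polynomial of signature $(1,1)$.

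For the bullet giving ${\mathcal D}^*(J) = 1$ at $(2,1)$, $r$ itself lies in $J$ with projective degree $1$, and conversely any $p \in J$ with ${\bf s}(p) = (2,1)$ yields a rational proper holomorphic self-map of $B^2$; by Alexander's theorem that for $n \ge 2$ every such map is an automorphism, the induced rational mapping has degree $1$, so the projective degree of $p$ is $1$. The $(1,2)$ case follows by negation. For the bullet giving ${\mathcal D}^*(J) = 3$ at $(2,2), (3,1), (1,3)$, we invoke the classifications directly: Faran's theorem [Fa1] gives a short list of rational proper maps $S^3 \to S^5$, each of degree at most $3$, yielding the bound for signature $(3,1)$ (and its negation handles $(1,3)$), and the classification in [L] gives the analogous bound for signature $(2,2)$.

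For the bullet asserting finiteness of ${\mathcal D}^*(J)$ when $A = 1$ or $B = 1$ with $AB > 1$, Lemma 7.1 supplies nonemptiness, while the target is a sphere in these cases and the degree estimate of [DL] specialized to $n = 2$ bounds the degree of any rational map $S^3 \to S^{2k-1}$ by ${k(k-1) \over 2}$. For the final bullet, asserting ${\mathcal D}^*(J) = \infty$ when $A, B \ge 2$ and $A+B \ge 6$, the plan is to write $(A,B)$ in one of the forms $(k,1) + a(2,1) + b(1,2)$ or $(1,k) + a(2,1) + b(1,2)$ with $k \ge 2$, $a, b \ge 0$, and $a+b \ge 1$, and then invoke Corollary 7.1. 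Solving the first system gives $a = B - 1 - 2b$ and $k = A - 2B + 2 + 3b$, and the second system is symmetric. The main obstacle is the case analysis: near the boundary $A+B = 6$ the parameters are tight, and for pairs where $B$ is large relative to $A$ the first decomposition is infeasible and one must pass to the symmetric second form. A short enumeration verifies that the exceptional small cases $(2,4)$, $(3,3)$, $(4,2)$ admit explicit decompositions and that larger $(A,B)$ are handled generically by choosing $b$ appropriately; Corollary 7.1 then produces elements of $J$ of arbitrarily large projective degree.
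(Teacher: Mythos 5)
Your overall route matches the paper's: the degenerate and small cases via the structure of squared norms, Proposition 5.1, the proper-self-map/automorphism argument, and the classifications of Faran and [L]; the infinite-degree cases via Lemma 7.1 together with the change-vector construction of Corollary 7.1. Most of this is sound (modulo the slip that the classification of maps from $B^2$ to $B^3$ is [Fa2], not [Fa1], and that for ${\mathcal D}^*(J)=3$ you should also note a degree-$3$ map actually occurs in Faran's list).

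There is, however, a genuine gap in the last bullet: the pair $(4,4)$ satisfies $A,B\ge 2$ and $A+B\ge 6$ but is \emph{not} expressible in either of your forms. For $(k,1)+a(2,1)+b(1,2)=(4,4)$ one needs $a+2b=3$, so $(a,b)\in\{(3,0),(1,1)\}$, giving $k=-2$ or $k=1$; both violate $k\ge 2$, and the symmetric form $(1,k)+a(2,1)+b(1,2)$ fails identically. (One can check $(4,4)$ is the \emph{only} such pair: assuming $A\ge B$, the optimal choice $b=\lfloor (B-1)/2\rfloor$ gives $k\ge2$ except exactly when $A=B=4$.) Your enumeration of ``exceptional small cases $(2,4)$, $(3,3)$, $(4,2)$'' lists cases that do decompose but omits the one that does not. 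This is precisely why the paper inserts Example 7.1 and Corollary 7.2: the explicit polynomial $(By+C\zeta)(x+y-\zeta)$ with $0<C<B$, composed with the moment map and bihomogenized, lies in ${\mathcal H}(2;3,2)\cap I(r)$, and a single application of the change vector $(1,2)$ --- the construction in the proof of Corollary 7.1 works from \emph{any} starting signature $(K,L)$, not only from $(k,1)$ or $(1,k)$ --- then produces elements of ${\mathcal H}(2;4,4)\cap I(r)$ of arbitrarily large projective degree. You need this extra seed to close the argument.
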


We do not know whether ${\mathcal D}^*(J)$ is finite in case $(A,B)$ is $(3,2)$ or $(2,3)$.
The following table summarizes the conclusions of the theorem. 
In this table, the numbers $0$, $1$ and $3$
denote known values of ${\mathcal D}^*(J)$. The symbol $-$ means
that ${\mathcal H}(2;A,B) \cap I(r)$ is empty. The letter $e$ indicates that the set is non-empty,
but we do not know whether ${\mathcal D}^*(J)$ is finite. The letter $f$ indicates that ${\mathcal D}^*(J)$ is finite.

\begin{center}
\begin{tabular}{c|c c c c c c c}
$\vdots$ & $\vdots$ & $\vdots$ & $\vdots$ & $\vdots$ & $\vdots$ & $\vdots$ & \\
5 & - & f & $\infty$ & $\infty$ & $\infty$ & $\infty$ & $\cdots$  \\
4 & - & f & $\infty$ & $\infty$ & $\infty$ & $\infty$ & $\cdots$  \\
3 & - & 3 & e & $\infty$ & $\infty$ & $\infty$ &  $\cdots$  \\
2 & - & 1 & 3 & e & $\infty$ & $\infty$ & $\cdots$  \\
1 & - & - & 1 & 3 & f & f & $\cdots$  \\
0 & 0 & - & - & - & - & - & $\cdots$ \\ \hline
\noalign{\smallskip}
${}^B/{}_A$ & 0 & 1 & 2 & 3 & 4 & 5 & $\cdots$
\end{tabular}
\end{center}

\section{Stability in higher dimensions}

We begin by stating a result from [DL]
 that provides the higher dimensional analogue of Lemma 7.1.

\begin{theorem} Put $T(n) = n^2 - 2n+2$. Assume $N \ge T(n)$.
Then there is a holomorphic polynomial mapping $f:S^{2n-1} \to S^{2N-1}$
for which $N$ is the minimum embedding dimension. \end{theorem}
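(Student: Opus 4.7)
The plan is to construct, for each $N \ge T(n) = n^2 - 2n + 2$, an explicit polynomial map $f : S^{2n-1} \to S^{2N-1}$ whose components are linearly independent over ${\bf C}$, so that $N$ is in fact the minimum embedding dimension. My strategy combines a base construction realizing $N = T(n)$ with an increment operation that, applied iteratively, reaches every integer $N$ beyond this threshold.

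For the base case, following the pattern of Lemma 7.1, I would use the (homogenized) Whitney polynomials generalized to $n$ variables. Lemma 3.3 provides a family of explicit polynomials $f_d$ for which $f_d - 1$ is divisible by $x+y-1$; an analogous identity in $n$ variables $x_1, \ldots, x_n$ produces a polynomial $W$ of appropriate degree with $W(x_1, \ldots, x_n)=1$ on the simplex $\sum x_j = 1$. Pulling back by the moment map ${\bf m}$ and bihomogenizing yields a Hermitian polynomial in the ideal generated by $r = \sum |z_j|^2 - |z_{n+1}|^2$ whose signature pair is $(T(n),1)$, exactly the dimension count $(n-1)^2+1$; the corresponding rational map realizes $T(n)$ as its minimum embedding dimension.

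For the increment step I would use the tensor-product-on-a-subspace construction. Given a polynomial map $g : S^{2n-1} \to S^{2M-1}$ with linearly independent components and any orthogonal subspace $A \subset {\bf C}^M$ of dimension $a$, define
$$ \tilde g \;=\; (I - E_A)\, g \;\oplus\; (E_A g) \otimes z , $$
where $E_A$ is orthogonal projection onto $A$ and $z = (z_1, \ldots, z_n)$. Since $\|z\|^2 = 1$ on $S^{2n-1}$, a direct computation gives $\|\tilde g\|^2 = \|g\|^2$ there, so $\tilde g$ again maps the sphere to a sphere; its target dimension is $M + (n-1)a$. Varying $a$ and iterating yields, starting from a single base map at dimension $M_0$, target dimensions $M_0 + k(n-1)$ for every $k \ge 0$. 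To fill the intermediate residues modulo $n-1$, I would produce $n-1$ distinct base maps at dimensions $T(n), T(n)+1, \ldots, T(n)+n-2$, obtained from the Whitney construction above by replacing one monomial summand with a suitable factorization (e.g.\ an identity of the form $1+t^{2k} = Q(t) Q(-t)$ from Lemma 3.2, applied to an auxiliary variable) to increase the component count by one at a time.

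The main obstacle is guaranteeing that at every stage the components of the resulting map remain linearly independent, so the minimum embedding dimension is genuinely $N$ and not smaller. For the tensor step this is controlled by a bidegree argument: the new components $(E_A g) \otimes z_j$ are homogeneous of degree strictly greater than the homogeneous components of $(I - E_A) g$ when $g$ is chosen homogeneous, so no linear relation can mix the two pieces, and within each piece independence is preserved because tensoring with $z$ is injective on polynomials. The more delicate issue is assembling the $n-1$ base cases with \emph{consecutive} integer embedding dimensions starting at $T(n)$; this is where the specific arithmetic $T(n) = (n-1)^2 + 1$ and the elementary factorizations of Section 3 do the decisive work, and it is the step I would expect to require the most care.
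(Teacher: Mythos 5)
First, a point of orientation: the paper does not prove this statement at all. Theorem 8.1 is quoted verbatim from the reference [DL] (``We begin by stating a result from [DL]''), and the only hint the paper gives about its proof is the later remark that the threshold $T(n)$ is quadratic in $n$ ``as a consequence of the postage stamp problem.'' The argument in [DL] builds the required maps as weighted orthogonal sums $\bigoplus_i \sqrt{t_i}\, f_i$ (with $\sum t_i = 1$) of partial tensor powers of the identity map; since the identity contributes target dimension $n$ and each partial tensoring adds a multiple of $n-1$, the achievable embedding dimensions are exactly the numbers $jn + K(n-1)$ with $j\ge 1$, $K\ge 0$, and the Frobenius--type arithmetic shows these comprise every integer $N \ge (n-1)^2+1 = T(n)$ while missing $(n-1)^2$. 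Your proposal shares the tensoring ingredient but is missing the orthogonal-sum ingredient, and that omission is fatal to the plan as written.

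Concretely, there are two gaps. (i) Your increment step changes the target dimension by $a(n-1)$, so iterating it from any single base map only reaches one residue class modulo $n-1$. You acknowledge this and propose $n-1$ base maps at the consecutive dimensions $T(n), T(n)+1, \dots, T(n)+n-2$, but the construction you sketch for them --- ``replacing one monomial summand with a suitable factorization \dots to increase the component count by one at a time'' --- is not substantiated and cannot be routine: the identities of Lemma 3.2 change the term count by $2^{m-1}-1$, not by $1$, and the existence of a cheap ``$+1$'' operation is exactly what the gap phenomenon of [HJX] rules out for small $N$. Producing one map in each residue class at the \emph{sharp} threshold is the whole content of the theorem, and it is precisely what the orthogonal sum (which shifts the residue class, since each summand contributes $n \equiv 1 \pmod{n-1}$) supplies in [DL]. (ii) Your linear-independence argument for the tensor step is incorrect for $a \ge 2$: if $h_1,\dots,h_a$ are linearly independent, the products $h_i z_j$ need not be, as the relation $h_1 z_2 - h_2 z_1 = 0$ for $h_i = z_i$ shows (this is why $z\otimes z$ has minimum embedding dimension $\binom{n+1}{2}$, not $n^2$). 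The step is safe for $a=1$, and for monomial maps independence can be checked by verifying the monomials are distinct, but as stated the claim that ``tensoring with $z$ is injective on polynomials'' settles independence is false.
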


\begin{corollary} For $N \ge T(n)$, the set
${\mathcal H}(n;N,1) \cap I(r)$ is nonempty. \end{corollary}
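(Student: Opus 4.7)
The plan is to convert the mapping produced by Theorem 8.1 into a Hermitian symmetric polynomial in the ideal $I(r)$ using exactly the machinery spelled out in Section 5.

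First I would invoke Theorem 8.1 to obtain a holomorphic polynomial mapping $f:S^{2n-1}\to S^{2N-1}$ for which $N$ is the minimum embedding target dimension. Write $f = g/h$ in lowest terms, where $g = (g_1,\ldots,g_N)$ is a tuple of holomorphic polynomials on $\mathbf{C}^n$ and $h$ is a holomorphic polynomial on $\mathbf{C}^n$. Since $f$ sends the unit sphere into $S^{2N-1}$, the Hermitian symmetric polynomial $p = \|g\|^2 - |h|^2$ vanishes on the set $\|z\|^2 = 1$.

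Next, following the bihomogenization procedure of Section 5, introduce the extra variable $z_{n+1}$ and form the homogenized polynomials $g^\ast$ and $h^\ast$ of equal degree. Set $p^\ast = \|g^\ast\|^2 - |h^\ast|^2$. Then $p^\ast$ is bihomogeneous and vanishes on the null-cone $\|z\|^2 - |z_{n+1}|^2 = 0$; this forces divisibility of $p^\ast$ by $r$, so $p^\ast \in I(r)$.

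It remains to verify that $\mathbf{s}(p^\ast) = (N,1)$, i.e.\ that $p^\ast$ lies in $\mathcal{H}(n;N,1)$ (in the sense used in Section 5, where the ambient ring is the bihomogeneous one in $n+1$ variables). The minimality of $N$ in Theorem 8.1 means that the components of $g$ cannot be reduced in number by a linear change of coordinates in the target, which forces $g_1,\ldots,g_N$ to be linearly independent, and further forces $h$ not to be a $\mathbf{C}$-linear combination of the $g_j$ (otherwise a unitary change in $\mathbf{C}^N$ would decrease the embedding dimension by collapsing the $-|h|^2$ term against one of the $|g_j|^2$ terms, contradicting minimality). Homogenization preserves linear independence, so $g_1^\ast,\ldots,g_N^\ast,h^\ast$ are linearly independent as polynomials, and the discussion following formula (15) in Section 5 then yields $\mathbf{s}(p^\ast)=(N,1)$. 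Hence $p^\ast \in \mathcal{H}(n;N,1)\cap I(r)$, proving the corollary.

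The only substantive step is the linear independence argument; everything else is a direct application of the bihomogenization recipe from Section 5 to the mapping provided by Theorem 8.1. I do not expect this to be a genuine obstacle, since minimality of the embedding dimension in [DL] is precisely designed to give such independence, but it is the one place where one must cite (or quickly verify) that the map from Theorem 8.1 has the sharp linear-independence property rather than merely existing.
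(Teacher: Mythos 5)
Your proposal is correct and follows essentially the same route as the paper: take the map from Theorem 8.1, form $\|g\|^2-|h|^2$, bihomogenize to land in $I(r)$, and use minimality of the embedding dimension to force the signature pair $(N,1)$. The only differences are cosmetic --- Theorem 8.1 already supplies a polynomial map (so $h\equiv 1$ and the reduction to lowest terms is unnecessary), and you spell out the linear-independence consequence of minimality that the paper states in a single sentence.
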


\begin{proof} Let $p$ be the bihomogenization of $||f||^2 - 1$. Since $f$ maps the sphere
to the sphere, $p \in I(r)$. Since $N$ is the minimum target dimension, ${\bf s}(p) = (N,1)$. \end{proof}

Let $p \in I(r)$ and suppose that $p$ is of degree $(d,d)$.  
What are the possible values of ${\bf s}(p)$?  Various results 
(for example, [BH], [Fa1], [Fa2], [HJ], [HJX]) provide
 information about ${\bf s}(p)$ in specific situations.
In Theorem 8.2 we prove a general {\it stability} result. 
We show that there is an integer $M$ with the following property.
Given a pair $(A,B)$ of non-negative integers,
if $A+B \ge M$, then we can decide
 whether $J = {\mathcal H}(n;A,B) \cap I(r)$ is non-empty, and if so, 
whether it has bounded projective degree.
For each lattice point $(A,B)$ with
 $A+B \ge N$ there are three possibilities:
no maps to a hyperquadric with that
signature exist, maps exist but there is a bound on their degrees, or maps of arbitrarily large degree exist.
When the degree is bounded we seek the largest degree ${\mathcal D}^*(J)$.

First we show how to construct new examples from given examples. The procedure here was used in the previous section in (19)
with the {\it change vectors} $(2,1)$ and $(1,2)$. Here we must use $(n,1)$ and $(1,n)$ as change vectors.

\begin{proposition} Suppose that $p= ||f||^2 - ||g||^2$ lies in ${\mathcal H}(n;A,B) \cap I(r)$.
Then the sets ${\mathcal H}(n;A+n,B+1) \cap I(r)$ and ${\mathcal H}(n;A+1,B+n) \cap I(r)$ are nonempty. 
Furthermore, in both cases, ${\mathcal D}^*(J) = \infty$. \end{proposition}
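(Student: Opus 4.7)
The plan is to adapt the argument of Corollary 7.1, replacing its two change vectors $(2,1)$ and $(1,2)$ by the vectors $(n,1)$ and $(1,n)$ appropriate to the $(n+1)$-variable setting. Since $p \in I(r)$, first write $p = r w$ with $w$ bihomogeneous of bi-degree $(d-1,d-1)$ for some $d\ge 1$. Without loss of generality assume $p$ is in lowest terms (has no non-constant $|h|^2$ factor); since $r$ is irreducible and not itself a squared modulus, $w$ then carries no such factor either, and in particular $w$ contains at least one monomial with $z_1$-exponent equal to $0$ (otherwise Hermitian symmetry would force $|z_1|^2 \mid w$).

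For each integer $m$ much larger than $d$, I would form
$$ q_m^{\pm} \;=\; p\cdot |z_{n+1}|^{2(m-d+1)} \;\pm\; r \cdot |z_1|^{2m}. $$
This is bihomogeneous of degree $(m+1,m+1)$ and manifestly lies in $I(r)$. Multiplication by $|z_{n+1}|^{2(m-d+1)}$ only reindexes monomials, so the first summand has signature $(A,B)$. The expansion
$$ r\cdot |z_1|^{2m} \;=\; |z_1|^{2m+2} \;+\; \sum_{j=2}^{n}|z_1|^{2m}|z_j|^2 \;-\; |z_1|^{2m}|z_{n+1}|^2 $$
is diagonal with signature $(n,1)$. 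Since $m-d+1\ge 2$, every monomial in the first summand has $z_{n+1}$-exponent at least $m-d+1$, whereas every monomial in the second summand has $z_{n+1}$-exponent at most $1$; the two supports are therefore disjoint, the Hermitian matrix of $q_m^{\pm}$ is block-diagonal, and signatures add, yielding ${\bf s}(q_m^+) = (A+n,B+1)$ and ${\bf s}(q_m^-) = (A+1,B+n)$. This establishes non-emptiness of both sets.

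For ${\mathcal D}^*(J) = \infty$, I would show that ${\mathcal D}(q_m^{\pm}) = m+1$, which then tends to infinity. Since ${\mathcal D}(q_m^{\pm}) \le m+1$ automatically, the plan is to rule out non-constant $|h|^2$ factors of $q_m^{\pm}$. Polarizing $q_m^{\pm}(z,{\overline w})$ and substituting ${\overline w} = (1, 0, \ldots, 0)$ kills the first summand (which carries ${\overline w}_{n+1}^{m-d+1}$) and reduces the second summand to $\pm z_1^{m+1}$; hence any $h$ with $|h|^2 \mid q_m^{\pm}$ must satisfy $h(z)\mid z_1^{m+1}$, so $h = c\,z_1^k$ for a constant $c$ and some $k \ge 0$. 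To force $k = 0$, use the monomial of $w$ having $z_1$-exponent $0$ produced by the lowest-terms reduction: multiplied by the $z_2{\overline w}_2$ summand of $r$ it yields a monomial in $p\cdot |z_{n+1}|^{2(m-d+1)}$ with $z_1$-exponent $0$, which cannot be cancelled by anything in $\pm r\cdot |z_1|^{2m}$ (every monomial there has $z_1$-exponent at least $m$). Thus $z_1 \nmid q_m^{\pm}$, forcing $k = 0$, and ${\mathcal D}(q_m^{\pm}) = m+1 \to \infty$.

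The main obstacle is this final projective-degree step. The signature computation is a routine disjoint-supports argument in the spirit of Section 7, but ruling out common $|h|^2$ factors requires both the lowest-terms reduction and the two probing computations above: specializing ${\overline w}$ to pin down the shape of $h$, and then inspecting $z_1$-exponents to pin down its degree.
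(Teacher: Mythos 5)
Your construction is essentially the paper's own: the paper writes $p=rv$, adds $\pm|w|^2$ for a monomial $w$ of large degree to the homogenization of $v$, and multiplies back by $r$, concluding that the signatures add to $(A+n,B+1)$ or $(A+1,B+n)$ because the new high-degree terms do not interfere with the old ones --- your $q_m^{\pm}$ is exactly this with $w=z_1^m$, and your disjoint-supports justification matches the paper's. You go further than the paper in explicitly verifying ${\mathcal D}(q_m^{\pm})=m+1$ (the paper leaves the absence of common $|h|^2$ factors implicit), and the specialization $\overline{w}=(1,0,\dots,0)$ does this cleanly; the only soft spot is your final sub-step, where the distinguished monomial of $z_1$-exponent zero produced by multiplying a monomial of $v$ by $z_2\overline{w}_2$ could in principle be cancelled by other products \emph{within} $rv$ itself, but the needed conclusion $z_1\nmid q_m^{\pm}$ follows at once because $z_1\mid rv$ would force $z_1\mid v$ (as $z_1$ is prime and does not divide $r$), hence $|z_1|^2\mid v$ by Hermitian symmetry, contradicting your lowest-terms reduction.
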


\begin{proof} Assume that $p$ is bihomogeneous of degree $(m,m)$. We write $p=rv$. Choose a monomial $w = w(z)$ 
of degree $k$, where $k > m+1$. Then bihomogenize $v \pm |w|^2$ to get $V$. Put $P=Vr$. Then $P$ is bihomogeneous of degree $(k,k)$ and it is
divisible by $r$. Because the new terms
do not interfere with the old terms, we have ${\bf s}(P) = (A+n,B+1)$ if we use $|w|^2$ and ${\bf s}(P) = (A+1, B+n)$
if we use $-|w|^2 $. \end{proof}

We now get to the main point. Given $n,A,B$,  is the set ${\mathcal H}(n;A,B) \cap I(r)$ non-empty,
and if so, is ${\mathcal D}^*\left( {\mathcal H}(n;A,B) \cap I(r) \right)$ finite?
For each $n\ge 2$, the result answers these questions for all but a finite number of pairs $(A,B)$.

\begin{theorem} Let $r(z,{\overline z}) = \sum_{j=1}^n |z_j|^2 - |z_{n+1}|^2$, and assume $n \ge 2$. Let $J= {\mathcal 
H}(n;A,B) \cap I(r)$
denote the collection of Hermitian symmetric bihomogeneous polynomials divisible by $r$ and with signature pair $(A,B)$.
Put $M=2(2n^2-n)$. Then the following hold:
\begin{itemize}

\item ${\mathcal H}(n;0,0) \cap I(r) = \{0\}$.

\item If $A=0$ or $B=0$ but not both are $0$, then ${\mathcal H}(n;A,B) \cap I(r)$ is empty.

\item ${\mathcal H}(n;A,1) \cap I(r)$ is empty for $A<n$ and ${\mathcal H}(n;1,B) \cap I(r)$ is empty for $B<n$.

\item If $A=1$ and $B \ge M-1$, or if $B=1$ and $A \ge M-1$, then ${\mathcal H}(n;A,B) \cap I(r)$ is non-empty, but 
${\mathcal D}^*\left ({\mathcal H}(n;A,B) \cap I(r) \right)$ is finite.

\item If $A+B \ge M$, and $A,B \ge 2$, then  ${\mathcal H}(n;A,B) \cap I(r)$ is non-empty and ${\mathcal D}^*\left({\mathcal 
H}(n;A,B) \cap I(r)\right)$ is infinite. 

\end{itemize}\end{theorem}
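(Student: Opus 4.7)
The plan is to dispatch the first four bullets using facts already established, then to reduce the fifth (main) bullet to a combinatorial coverage argument solved by iterating Proposition 8.1.

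Bullets 1--3 are essentially immediate. For bullet 1, ${\bf s}(p) = (0, 0)$ forces $p \equiv 0$. For bullet 2, if ${\bf s}(p) = (A, 0)$ with $A \ge 1$, then $p = \|f\|^2$ for a nonzero holomorphic polynomial mapping $f$ with linearly independent components; if $p \in I(r)$, then $\|f\|^2$ vanishes identically on the real hypersurface $\{r = 0\}$, which forces each component of $f$ to vanish on a real hypersurface of $\mathbb{C}^{n+1}$ and hence identically, contradicting nontriviality. The $(0, B)$ case is symmetric via negation. For bullet 3, a nonzero $p \in \mathcal{H}(n; A, 1) \cap I(r)$ produces a nonconstant rational CR map $S^{2n-1} \to S^{2A-1}$ by the dehomogenization procedure from Section 5, and Proposition 5.1 then forces $A \ge n$; the $(1, B)$ case is handled by negation.

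Bullet 4 combines two existing results. For $A = 1$ and $B \ge M - 1$, one checks $M - 1 = 4n^2 - 2n - 1 \ge T(n) = n^2 - 2n + 2$ when $n \ge 2$, so Corollary 8.1 together with negation ($p \mapsto -p$) supplies a nonzero element of $\mathcal{H}(n; 1, B) \cap I(r)$. Any such element corresponds to a rational sphere mapping $S^{2n-1} \to S^{2B-1}$, whose degree is bounded by the sphere-to-sphere degree estimate quoted at the end of Section 5 (specifically $B(B-1)/(2(2n - 3))$ from [DL]); hence $\mathcal{D}^*$ is finite. The $B = 1$ case is symmetric.

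The bulk of the work is bullet 5. The strategy is to iterate Proposition 8.1 from the starting points supplied by Corollary 8.1 and its negation. Using the two moves $(+n, +1)$ and $(+1, +n)$ from Proposition 8.1, one obtains nonemptiness at every pair in
\[ R_1 = \{(N + an + b,\, 1 + a + bn) : N \ge T(n),\ a, b \in \mathbb{Z}_{\ge 0}\} \]
and, by symmetry, in $R_2 = \{(1 + an + b,\, N + a + bn) : N \ge T(n),\ a, b \in \mathbb{Z}_{\ge 0}\}$. When the target $(A, B)$ has $A, B \ge 2$, at least one application of Proposition 8.1 is needed to leave the starting points $(N, 1)$ and $(1, N)$, and Proposition 8.1 itself then guarantees $\mathcal{D}^*(J) = \infty$. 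So the claim reduces to a combinatorial coverage statement: for $A + B \ge M = 4n^2 - 2n$ and $A, B \ge 2$, show $(A, B) \in R_1 \cup R_2$.

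The heart of the proof---and the main potential obstacle---is verifying this coverage. By symmetry assume $A \ge B$ and target $R_1$: write $B - 1 = qn + r$ with $0 \le r \le n - 1$, and set $a = r$, $b = q$, so that $a + bn = B - 1$. A direct computation gives $an + b = rn + q = (B - 1 + r(n^2 - 1))/n$, whose maximum over the residue $r$ occurs at $r = n - 1$ (i.e.\ when $n \mid B$), yielding $an + b \le B/n + n^2 - n - 1$. The requirement $N := A - (an + b) \ge T(n)$ then reduces to $A(n-1)/n \ge 2n^2 - 3n + 1 = (2n-1)(n-1)$, i.e., $A \ge n(2n - 1) = M/2$, which follows from $A \ge B$ and $A + B \ge M$. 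Equality holds precisely at $A = B = M/2$, confirming that the constant $M = 2(2n^2 - n)$ is sharp for this approach. Small-$B$ boundary cases (for instance $B = 2$) are handled directly by the choice $a = 1$, $b = 0$, which requires only $A \ge T(n) + n$, an inequality easily implied by $A + B \ge M$.
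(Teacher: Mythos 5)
Your proposal is correct and follows the paper's proof in overall structure: bullets 1--4 are handled identically (the complex-variety argument for semidefinite multiples, Proposition 5.1 for the $(A,1)$ obstruction, and Corollary 8.1 plus the degree bound from [DL] for bullet 4), and bullet 5 uses the same engine, namely the starting points $(N,1)$ and $(1,N)$ supplied by Corollary 8.1 together with the change vectors $(n,1)$ and $(1,n)$ from Proposition 8.1. Where you genuinely diverge is in the final lattice-coverage computation, and your version is the tighter one. The paper sets $a = B - bn - 1$ with $b = \lfloor B/n \rfloor$, a choice that can make $a$ negative (e.g.\ when $n$ divides $B$), and its concluding inequality is verified under the hypothesis $2B \ge M$, i.e.\ $B \ge M/2$, which does not follow from $A+B \ge M$ together with $A \ge B$; the paper's computation as written therefore only covers part of the claimed range. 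Your choice $b = \lfloor (B-1)/n \rfloor$, $a = (B-1) \bmod n$ guarantees $a,b \ge 0$ and $a + bn = B-1$ exactly, and your bound $an+b \le B/n + n^2 - n - 1$ combined with $A - B/n \ge A(n-1)/n \ge (M/2)(n-1)/n = (2n-1)(n-1)$ closes the argument using only $A \ge B$ and $A+B \ge M$, with the equality case showing $M$ is sharp for this method. You also correctly observe that $B \ge 2$ forces $a + bn \ge 1$, so at least one application of Proposition 8.1 occurs and ${\mathcal D}^*(J) = \infty$ follows. In short, your writeup matches the paper's strategy and repairs the one step where the paper's own computation is loose.
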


\begin{proof} Certainly ${\mathcal H}(n;0,0) \cap I(r)$ is non-empty, as $0$ is a multiple of $r$.
Next we list some pairs $(A,B)$ for which $J={\mathcal H}(n;A,B) \cap I(r)$ is empty. First suppose that $B=0$.
Then we have a bihomogeneous polynomial $p= ||f||^2$ which is divisible by $||z||^2 - |z_{n+1}|^2$. This situation cannot
occur unless $f=0$, because the zero set of $||f||^2$ must be a complex algebraic variety, and the only such variety containing
the zero set of $p$ is the whole space. Therefore ${\mathcal H}(n;A,0) \cap I(r)$ is empty for $A>0$.
By the same reasoning applied to $-p$, we see that ${\mathcal H}(n;0,B) \cap I(r)$ is empty for $B>0$.
These remarks prove the first two items.

Suppose next that $B=1$. Then we have a bihomogeneous
polynomial $q= ||f||^2 - |g|^2$
 divisible by $p$ with ${\bf s}(q)=(A,1)$.
 The map ${f \over g}$ is then a rational
 holomorphic map from the
 unit sphere to the unit sphere in ${\bf C}^A$. This map
 cannot be constant, because then $q$ would be zero,
 and $A=B=0$. Therefore ${f \over g}$
is a proper holomorphic rational mapping between
 the unit ball in the domain and the unit ball in the target.
 By Proposition 5.1
the target dimension must be at least as large as the domain dimension. 
Therefore, if $A <n$, then ${\mathcal H}(n;A,1) \cap I(r)$ is also empty. As before (using $-q$)
it follows that if $B<n$, then ${\mathcal H}(n;1,B) \cap I(r)$ is also empty. 

On the other hand ${\mathcal H}(n;n,1) \cap I(r)$ is non-empty, as it contains the identity map. Because of the gap phenomenon [HJX] we cannot conclude
that ${\mathcal H}(n;A,1) \cap I(r)$ is non-empty
 for all $A$ such that $A \ge n$. By Theorem 8.1 however
there is an integer $N_0$ with the following property: for each $N$ with $N \ge N_0$, there is a proper monomial mapping $f$ from the unit ball $B_n$
to $B_N$ for which $N$ is the minimal imbedding dimension. Furthermore, 
the number $N_0$ (given explicitly in Theorem 8.1)
is quadratic in $n$, as a consequence of the postage stamp problem. After homogenizing we obtain 
a bihomogeneous polynomial in ${\mathcal H}(n;N,1) \cap I(r)$ that is not in  ${\mathcal H}(n;N-1,1) \cap I(r)$. 
As before, after multiplying by $-1$, we see that ${\mathcal H}(n;1,N) \cap I(r)$ is non-empty
for $N \ge N_0$. We next show that 
${\mathcal D}^*({\mathcal H}(n;1,N) \cap I(r))$ is finite. To do so, it suffices to show that there is some bound $c(n,N)$ on the degree
of a proper rational mapping between the unit ball $B_n$ and the unit ball $B_N$. In fact the inequality (for $n\ge 2$)
$$ d \le {N(N-1) \over 2(2n-3)}$$
for the degree was proved in [DL]. 

Next we must show, for each  $A,B \ge 2$ and $A+B$ sufficiently large,
 that there is a  bihomogeneous polynomial
$q$, divisible by $||z||^2 - |z_{n+1}|^2$,  with ${\bf s}(q) = (A,B)$.
 We first assume that $N\ge N_0$, where 
$N_0$
is as in the previous paragraph. We therefore have
 a bihomogeneous polynomial map, say $p = ||f||^2 - |g|^2$,  of degree 
$(d,d)$
with signature pair $(N,1)$. We may assume that it is not
 divisible by $|z_{n+1}|^2$. Choose an arbitrary large integer $k$.

We write $p=vr$. Consider as before $q= (Hv \pm |z_1|^{2k}) r$. As in the case $n=2$ we conclude that
${\bf s}(q) = (N,1) + (n,1)$ if the plus sign is chosen, and ${\bf s}(q) = (N,1) + (1,n)$ if the minus sign is chosen.
As before it follows that we can find elements of ${\mathcal H}(n;A,B) \cap I(r)$ with arbitrarily large degree whenever
there are nonnegative $a$ and $b$, at least one of them positive,
with one of the following:
$$ (A,B) = (N,1) + a(n,1) + b(1,n),$$
$$ (A,B) = (1,N) + a(n,1) + b(1,n).$$
We finish the proof by showing that the set of such lattice points 
(as $N$, $a$, and $b$ vary) includes all pairs with $A+B$ sufficiently large and both at least $2$.

We claim that this set includes all $(A,B)$ such that
$A+B \ge 2(n^2-2n)$.  To see why, we first assume without loss of generality that $A \ge B$.  
Then $(A,B) = (N,1)+a(n,1)+b(1,n)$ gives $ N = A - an -b$.
Put $a= B-bn-1$. By Theorem 8.1 or Corollary 8.1, we need to make $N \ge 
n^2-2n+2$.
We get
$$ A-Bn+bn^2 + n - b = N \ge  n^2-2n+2, $$
which simplifies to
$$ A - Bn + b(n^2-1) \ge n^2-3n+2. \eqno (23) $$
Since $A \ge B$, to prove (23) it suffices to show that
$$ B-Bn+ b(n^2-1) \ge n^2-3n+2, $$
which is equivalent to
$$ b \ge {n-2 \over n+1} + {B \over n+1}. \eqno (24) $$
Put $b = \lfloor \frac{B}{n} \rfloor$ in (24). Then (24) is satisfied if

$$ \lfloor \frac{B}{n} \rfloor \ge {B \over n} - 1 \ge {n-2 \over n+1} + {B \over n+1}. \eqno (25) $$
But the right-hand inequality in (25) holds if and only if $B \ge 2n^2-n$.
Thus, if $A+B \ge 2B \ge M$, then $N \ge n^2-2n+2$. Therefore
${\mathcal H}(n;N,1) \cap I(r)$ is nonempty.
\end{proof}

\begin{corollary} Given the pair $(A,B)$ with $A+B\ge M$ and $A,B \ge 2$,
there exist rational mappings from  $S^{2n-1}$ to a hyperquadric $Q(A,B-1)$ whose images lie in no complex hyperplane
and whose degrees can be chosen arbitrarily large. 
\end{corollary}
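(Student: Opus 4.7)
The plan is to read this corollary as a direct translation of the last bullet of Theorem 8.2 from the language of Hermitian forms divisible by $r$ into the language of CR mappings to hyperquadrics. By Theorem 8.2, under the hypotheses $A+B\ge M$ and $A,B\ge 2$, the set $J = {\mathcal H}(n;A,B)\cap I(r)$ is nonempty and ${\mathcal D}^*(J)=\infty$, so for each integer $m$ we may choose a bihomogeneous $q\in J$ of projective degree at least $m$.

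First, I would invoke equation (4) and write $q = \|f\|^2 - \|g\|^2$ with $f:{\bf C}^{n+1}\to{\bf C}^A$ and $g:{\bf C}^{n+1}\to{\bf C}^B$ homogeneous holomorphic polynomial mappings of the same degree, and with the components of $f\oplus g$ linearly independent (this is the standard normal form following Definition 2.2, and it uses exactly that ${\bf s}(q)=(A,B)$). The induced rational map $F = f\oplus g : {\bf P}^n \to {\bf P}^{A+B-1}$ has degree equal to ${\mathcal D}(q)$ by Definition 6.1, so its degree can be chosen arbitrarily large.

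Next, because $q\in I(r)$, the polynomial $q$ vanishes on the projective hyperquadric $\{r=0\}$; setting $z_{n+1}=1$ identifies $S^{2n-1}$ as an affine chart of this hyperquadric. Hence $\|f\|^2 = \|g\|^2$ on $S^{2n-1}$, which, after dividing by any suitable nonvanishing denominator and dehomogenizing as in the discussion following equation (4) in Section 2, exhibits $F|_{S^{2n-1}}$ as a rational CR map into $Q(A,B-1)$.

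Finally, I would observe that the image lies in no complex hyperplane: any such hyperplane would give a nontrivial linear relation among the components of $f\oplus g$ when restricted to the sphere, and since the sphere is a uniqueness set for holomorphic polynomials, the same relation would hold identically on ${\bf C}^{n+1}$, contradicting the linear independence built into the normal form (4). There is essentially no obstacle in this proof; the work has already been done in Theorem 8.2, and the corollary is obtained by the routine dictionary between Hermitian symmetric polynomials in $I(r)$ and rational sphere-to-hyperquadric maps set up in Sections 2 and 5.
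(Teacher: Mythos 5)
Your proposal is correct and is essentially the argument the paper intends: the corollary is the direct translation of the last bullet of Theorem 8.2 via the dictionary of Sections 2 and 5, with the hyperplane condition identified with linear independence of the components of $f\oplus g$ exactly as in the paper's remark following the corollary. The paper gives no separate proof because this translation is regarded as routine, and your handling of the two nontrivial points (degree of the map equals the projective degree, and a linear relation on the sphere forces one identically) matches the reasoning used elsewhere in the paper.
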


It is possible to state the Corollary in several equivalent fashions.
For example, in the projective setting, where the mappings are written in homogeneous coordinates and the target
is $HQ(A,B)$, the condition on the image is equivalent to assuming that
the components of the mapping are linearly independent.

\section{bibliography}

[BEH] Baouendi, M. S., Ebenfelt, P., and Huang, X., Holomorphic mappings between hyperquadrics
with small signature difference, arXiv:0906.1235.
\medskip

[BH] Baouendi, M. S.and Huang, X., Super-rigidity for holomorphic mappings between hyperquadrics with positive signature,
J. Differential Geom.  69  (2005),  no. 2, 379-398. 
\medskip

[CD] Catlin, D. and D'Angelo, J., A stabilization theorem for 
Hermitian forms and applications to holomorphic mappings, {\it Math. Res. Lett.} 3 (1996), 149-166.
\medskip

[D1] D'Angelo, J., Several Complex Variables and the Geometry of Real Hypersurfaces,
CRC Press, Boca Raton, 1993.
\medskip

\medskip
[D2] D'Angelo, J., Proper holomorphic mappings, 
positivity conditions, and isometric imbedding, {\it J. Korean Math Society}, May 2003, 1-30.

\medskip
[D3] D'Angelo, J., Inequalities from Complex Analysis, Carus Mathematical Monograph No. 28, 
Mathematics Association of America, 2002.

\medskip

[D4] D'Angelo, J., Invariant CR Maps, in 
Complex Analysis: {\it Several complex variables and connections with PDEs and geometry} (Fribourg 2008),
Trends in Mathematics, Birkhauser, to appear.

\medskip
[DL] D'Angelo, J. and Lebl, J.,
Complexity results for CR mappings between spheres,
Int. J. of Math., Vol. 20, No. 2 (2009),  149-166.

\medskip
[DL2] D'Angelo, J. and Lebl, J.,
 On the complexity of proper holomorphic mappings between balls.  Complex Var. Elliptic Equ.  54  (2009),  no. 3-4, 187-204.

\medskip

[DKR] D'Angelo, J., Kos, \v S., and Riehl, E.,
A Sharp Bound for the Degree of Proper Monomial
Mappings Between Balls, {\it J. Geometric Analysis}, Volume 13 (2003), no. 4,
581-593. 

\medskip 

[DLP] D'Angelo, J., Lebl, J.,  and Peters, H.,
Degree estimates for polynomials constant on hyperplanes,  {\it Michigan Math. J.}, 55 (2007), no. 3, 693-713.
\medskip

\medskip
[EHZ] Ebenfelt, P., Huang, X.,  and Zaitsev, D., Rigidity of CR-immersions into spheres,
{\it Comm. Anal. Geom.},  12  (2004),  no. 3, 631-670.

\medskip
[Fa1] Faran, J., On the linearity of 
proper maps between balls in the low codimension case,
{\it J. Diff. Geom.}, 24 (1986), 15-17.

\medskip
[Fa2] Faran, J., Maps from the two-ball to the three-ball, 
{\it Inventiones Math.}, 68 (1982), 441-475.

\medskip
[F] Forstneric, F., 
Extending proper holomorphic maps of positive codimension, 
{\it Inventiones Math.}, 95(1989), 31-62.
\medskip

\medskip
[H] Huang, X., On a linearity problem for proper maps between balls in complex spaces
of different dimensions, {\it J. Diff. Geometry} 51 (1999), no 1, 13-33.

\medskip
[HJ] Huang, X., and Ji, S., 
Mapping $B_n$ into $B_{2n-1}$, {\it Invent. Math.} 145 (2001), 219-250.
13-36.
\medskip

[HJX] Huang, X., Ji, S.,  and D. Xu, 
A new gap phenomenon for proper holomorphic mappings from $B\sp n$ into $B\sp N$.
{\it Math. Res. Lett.} 13 (2006), no. 4, 515--529. 

\medskip
[L] Lebl, J., Normal forms, Hermitian operators, and CR maps of spheres and hyperquadrics,  arXiv:0906.0325.

\medskip
[LP] Lebl, J. and Peters, H., Polynomials constant on a hyperplane and CR maps of hyperquadrics, arXiv:0910.2673.

\medskip

[Mq] Marques de Sá, E.,
On the inertia of sums of Hermitian matrices,
Linear Algebra Appl. 37 (1981), 143--159.
\medskip

[M] Meylan, F., Degree of a holomorphic map between unit balls from ${\bf C}^2$ 
to ${\bf C}^n$,  {\it Proc. Amer. Math. Soc.}  134  (2006),  no. 4, 1023-1030.

\end{document}